\documentclass[12pt,oneside,reqno]{amsart}

\usepackage{amsthm, amssymb, amsmath, amsfonts}
\usepackage{enumerate}
\usepackage[colorlinks]{hyperref}

\setlength{\textheight}{23cm}
\setlength{\textwidth}{15cm}
\setlength{\oddsidemargin}{0.5cm}
\setlength{\evensidemargin}{0.5cm}
\setlength{\topmargin}{0cm}

\newtheorem{thm}{Theorem}[section]

\newtheorem{defi}[thm]{Definition}
\newtheorem{lem}[thm]{Lemma}
\newtheorem{prop}[thm]{Proposition}
\newtheorem{corl}[thm]{Corollary}
\newtheorem{rem}[thm]{Remark}

\allowdisplaybreaks
\sloppy

\title[ Weak containment of representation]{Weak containment of representation on topological groupoids}
\author[K. N. Sridharan]{K. N. Sridharan}
\address{K. N. Sridharan,\newline\indent Department of Mathematics,\newline\indent Indian Institute of Technology Delhi,\newline\indent New Delhi - 110016, India.}
\email{sreedu242@gmail.com}
\author[N. S. Kumar]{N. Shravan Kumar}
\address{N. Shravan Kumar,\newline\indent Department of Mathematics,\newline\indent Indian Institute of Technology Delhi,\newline\indent New Delhi - 110016, India.}
\email{shravankumar.nageswaran@gmail.com}
\begin{document}
\begin{abstract}
    Let $G$ be a second-countable, locally compact Hausdorff groupoid equipped with a Haar system $\lambda$. This paper investigates the weak containment of continuous unitary representations of groupoids, formulated in terms of Hilbert bundles over the unit space $G^{0}$. We show that both induction and inner tensor product of representations preserve weak containment. Additionally, we introduce the notion of a topological invariant mean on $G/H$ and explore its connection to amenability.  We establish a groupoid analogue of Greenleaf's theorem.  Finally, we provide independent results concerning the restriction of induced representations for continuous unitary representations of relatively clopen wide subgroupoids $H\subseteq G$ with discrete unit space and closed transitive wide subgroupoids of compact transitive groupoids.
\end{abstract}
\keywords{Amenable Groupoids, Weak containment, inner tensor product, topological invariant mean}
\subjclass{Primary 18B40, 22A22; Secondary 46L08}
\maketitle

\section{Introduction}
The notion of weak containment introduced by Fell (see \cite{fell1960dual,fell1962weak,fell1964weak}) plays an important role in the theory of representation of locally compact groups. In \cite{fell1960dual}, the author briefly discusses the properties of groups whose left regular representation weakly contains every irreducible representation. Furthermore, \cite{fell1962weak} demonstrates that induced representations preserve the weak containment property and establishes a relationship between the inner tensor product of representations and weak containment. Fell examines the weak Frobenius properties of group $G$ in \cite{fell1964weak}. Later, Greenleaf\cite{greenleaf1969amenable} showed that the group $G$ satisfies weak Frobenius property (WF1) if and only if $G$ is amenable.  It is shown that for any irreducible representation of an amenable group, the representation is weakly contained in the induced representation of its restriction to any subgroup. It also addresses the notion of an amenable action of a group on a locally compact space $Z$, along with the existence of a left-invariant mean on $Z$.

These theories have been further generalized to encompass broader structures such as hypergroups and measurable groupoids. The concept of amenable hypergroups is introduced in \cite{skantharajah1992amenable}, and more results related to weak containment of representation of hypergroups and amenable hypergroups can be referred to in \cite{pavel2007weak,bagheri2018kazhdan}. The concept of amenable measured groupoids was introduced by Renault \cite{renault1997fourier}. Further results concerning this notion, including various equivalent characterizations of amenability—such as the existence of invariant means, Reiter conditions, and the weak containment of the trivial representation in the regular representation—are presented in \cite{MR1799683}.

In this paper, we investigate certain properties of the inner tensor product of continuous unitary representations of a locally compact groupoid $G$ and weak containment of representations of $G$. In \cite{sridharan2025induced}, we introduced the notion of an induced representation $(\operatorname{ind}_{H}^{G}(\pi),\mu)$ associated with a continuous unitary representation $\pi$ of a closed wide subgroupoid $H$, under the assumption of a full equivariant measure system $\mu=\{\mu^{u}\}_{u\in G^{0}}$ the quotient space $G/H$. The construction of induced representations via Rieffel's induction machinery has already been developed in \cite[Section 5.3]{williams2019tool}. That approach, however, relies on Renault's Disintegration Theorem and consequently requires measurable Hilbert bundles together with a quasi-invariant measure on the unit space $G^{0}$. In contrast, our approach is entirely topological: we work with continuous Hilbert bundles over $G^{0}$
 and do not take any  quasi-invariant measure on the unit space. Our development is based on the theory of continuous unitary representations of groupoids on continuous Hilbert bundles established in \cite{bos2011continuous}.
 
 We define weak containment of unitary representations of groupoids and show that weak containment is preserved by the corresponding induced representation. Similar to groups, we define the topological invariant mean on $G/H$ and prove its existence when $G$ is amenable.  Using these results we prove the groupoid version of Greenleaf's theorem. Unlike groups, each of these results depends on the equivariant measure system $\mu$ on $G/H$ and the Haar system $\lambda$ on $G$. Subsequently, we establish an independent result concerning the containment of the representation of a relatively clopen wide subgroupoid $H$ within the restriction of its induced representation. An analogous result is also demonstrated when $H$ is a closed wide transitive subgroupoid of a compact transitive groupoid $G$. These results are analogous to classical theorems concerning the representations of open subgroups of locally compact groups and closed subgroups of compact groups.

 The structure of the paper is as follows. In Section $2$, we present the necessary background on groupoids, amenable groupoids, and their representations. Section $3$ introduces and explores weak containment and the inner tensor product of representations. We establish that for a closed wide subgroupoid $H$, and arbitrary representations $\rho$ of $G$ and $\pi$ of $H$, the equality  $\rho \otimes (\operatorname{ind}_{H}^{G}(\pi),\mu)= (\operatorname{ind}_{H}^{G}(\rho_{|_{H}}\otimes \pi),\mu)$ holds. Furthermore, we show that weak containment is preserved under both induction and the inner tensor product of representations. In Section $4$, we introduce the notion of topological invariant mean on $G/H$ and define a sequence of $C_{c}(G/H)$ functions with certain properties as topological approximate invariant density on $G/H$. We demonstrate that the existence of a topological invariant mean on $G/H$ is equivalent to the existence of a topological approximate invariant density on $G/H$.  Section $5$ examines amenability in terms of weak containment of representations. In particular, we prove that $G$ is amenable if and only if it satisfies a weak Frobenius property analogous to that for groups. The final section addresses the restriction of induced representations of groupoids. We show that for a relatively clopen wide subgroupoid $H \subseteq G$ with discrete unit space, and a continuous unitary representation $\pi$ of $H$, $\pi$ forms a subrepresentation of the restriction of induced representation $(\operatorname{ind}_{H}^{G}(\pi),\mu)$ to $H$, where $\mu=\{\mu^{u}\}_{u\in G^{0}}$ is the full equivariant measure system of counting measures.  Similarly, we show that continuous unitary representation $\pi$ of a closed wide transitive subgroupoid $H$ of a compact transitive groupoid $G$, forms a subrepresentation of the restriction of induced representation $(\operatorname{ind}_{H}^{G}(\pi),\mu)_{|_{H}}$ where $\mu$ is a full equivariant measure system.  These results enable us to extend the Fourier–Stieltjes algebra of $H$ to $G$ in both cases.

 \section{Preliminaries}
Let's start by recalling some basics on groupoids.

 A groupoid is a set $G$ equipped with a partial product map $G^{2} \to G:(x,y) \to xy$, where $G^{2}\subseteq G\times G$ denotes the set of composable pairs, and an inverse map  $G \to G: x\to x^{-1}$,  satisfying the following conditions:
    \begin{enumerate}[(i)]
        \item $(x^{-1})^{-1}=x$,
        \item $(x,y),(y,z)\in G^{2}$ implies $(xy,z),(x,yz)\in G^{2}$ and $(xy)z=x(yz)$,
        \item $(x^{-1},x)\in G^{2}$ and if $(x,y)\in G^{2}$, then $x^{-1}(xy)=y$,
        \item $(x,x^{-1})\in G^{2}$ and if $(z,x)\in G^{2}$, then $(zx)x^{-1}=z.$
    \end{enumerate}

    A topological groupoid consists of a groupoid $G$ equipped with a topology that is compatible with its algebraic structure, in the sense that:
    \begin{enumerate}[(i)]
        \item the inverse map $G\to G: x\to x^{-1}$ is continuous,
    
        \item the product map $G^{2}\to G: (x,y)\to xy$ is continuous where $G^{2}$ is endowed with the subspace topology inherited from $G \times G$. 
    \end{enumerate}
     We restrict our attention to second-countable, locally compact, Hausdorff groupoids. The unit space $G^{0}\subseteq G$, equipped with the subspace topology, is itself a locally compact Hausdorff space. Moreover, the range and domain maps ( denoted $r,d:G \to G^{0}$ respectively) are continuous.

      Let $G$ be a topological groupoid and  $X$  a locally compact Hausdorff space equipped with a continuous map $r_{X}: X \to G^{0}$, referred to as the moment map.  A left action of $G$ on $X$ is defined by a continuous map $(\gamma,x) \to \gamma\cdot x$ from  the set $G*X= \{(\gamma,x) \in G\times X : s(\gamma) = r_{X}(x) \}$ to X satisfying the following conditions:
   \begin{enumerate}[(i)]
       \item $r_{X}(x) \cdot x= x~ \text{for all}~ x \in X$ and
       \item if $(\gamma,\eta) \in  G^{2}$ and $(\eta,x) \in  G*X$, then $(\gamma,\eta \cdot x) \in  G*X$ and $\gamma \eta \cdot x=\gamma \cdot (\eta \cdot x)$.
   \end{enumerate} 
   In this setting, $X$ is called left $G$-space.
   
   An important example is the quotient space $G/H$, where $H$ is a closed wide subgroupoid of $G$. The space $G/H$ is  the set of equivalence class obtained from the equivalence relation $\sim$ as follows: $l \sim g \Leftrightarrow r(l)=r(g)~ \text{and}~ l^{-1}g\in H$ . The space $G/H$ inherits a natural left $G$-space structure with  action $g\cdot xH=(gx)H$.
 Endowed with the quotient topology and quotient map $q_{H}$, it becomes a locally compact Hausdorff space. The associated moment map $r_{G^{0}}: G/H \to G^{0}, r_{G^{0}}(gH)=r(g)$ is both open and continuous.
 
Let $Y$ and $X$ be left $G$-spaces, and suppose $\pi: Y \to X$ is a $G$-equivariant map, meaning that for all $g \in G$ and $y \in Y$, $\pi(g \cdot y) = g \cdot \pi(y)$.
A fully equivariant $\pi$-system $\beta$ is a family of measures $\{ \beta^x : x \in X \}$ on $Y$ satisfying the following conditions:

\begin{enumerate}[(i)]
    \item $\operatorname{supp}(\beta^{x})= \pi^{-1}(x)$
    \item $x\to \beta(f)(x)= \int_{Y} f(y)d\beta^{x}(y)$ is continuous for $f\in C_{c}(Y)$
    \item For $f\in C_{c}(Y) , (\gamma,x)\in G*X$,
    $$ \int_{Y} f(\gamma \cdot y)d\beta^{x}(y)=\int_{Y} f(y)d\beta^{\gamma \cdot x}(y)$$
\end{enumerate} 

Notable examples of fully equivariant $\pi$-systems include the left Haar system on a groupoid $G$ with $\pi = r$, and the case $Y = G/H$, $X = G^0$, with $\pi = r_{G^0}$. We assume the existence of a fully equivariant $r_{G^0}$-system $\mu=\{\mu^{u}\}_{u\in G^{0}}$ on $G/H$ throughout. According to \cite[Proposition 2.4]{renault2006groupoid}, if $G$ is a locally compact groupoid equipped with a left Haar system, then the range and domain maps are open maps. For further details on groupoids and left $ G$-space, the reader is referred to \cite{renault2006groupoid, paterson2012groupoids, williams2019tool}.

Now we look into some basics of amenable locally compact groupoids.

Let $X$ and $Y$ be $G$-spaces and let $\pi:Y \to X$ be a continuous $G$- equivariant surjection. An \textit{approximate invariant continuous mean} for $\pi$ is a net $\{m_{i}\}$ of $\pi$-systems $m_{i}=\{m_i^x\}_{x\in X}$ of probability measures such that the total variation norm $\| \gamma^{-1}\cdot m^{x}_{i}-m_{i}^{\gamma^{-1}\cdot x}\|_{1}\to 0$ uniformly on compact subsets of $X*G=\{(x,\gamma): r_{X}(x)=r(\gamma)\}$. We say that a continuous $G$- equivariant surjection $\pi:Y \to X$ is \textit{amenable} if it admits an approximate invariant continuous mean.

\textbf{Amenable Groupoid}: A second countable, locally compact groupoid $G$ is amenable if the range map $r:G \to G^{0}$ is amenable. A locally compact groupoid with a Haarsystem $\lambda$ is \textit{topologically amenable}, as defined in \cite{williams2019tool}, if there is a net $\{f_{i}\} \subset C_{c}(G)$ such that 
\begin{enumerate}[(i)]
    \item the function $u \to \lambda(|f_{i}|^{2})(u)=\int_{G}|f_{i}(\gamma)|^{2}d\lambda^{u}(\gamma)$ are uniformly bounded  and
    \item the functions $\gamma\to f_{i}* f^{*}_{i}(\gamma)$ converge to a constant function $1$ uniformly on compact sets in $G$, where $f^{*}(\gamma)=\overline{f(\gamma^{-1})}$.
\end{enumerate}
The proposition \cite[Proposition $2.2.13$]{MR1799683} shows that the two are equivalent on a locally compact groupoid.

We now define continuous representations of groupoids over continuous fields of Hilbert spaces.

A continuous field of Hilbert spaces over $G^{0}$ consists of a family $\{\mathcal{H}_{u}\}_{u\in G^{0}}$ of Hilbert spaces together with a set of vector fields $\Gamma \subset \prod_{u\in G^{0}}\mathcal{H}_{u}$,  satisfying the following conditions:
\begin{enumerate}[(i)]
    \item $\Gamma$ is a complex linear subspace of $\prod_{u\in G^{0}}\mathcal{H}_u$.
    \item For every $u\in G^{0}$, the set $\{\xi(u): \xi \in \Gamma\}$ is dense in $\mathcal{H}_{u}$.
    \item For every $\xi \in \Gamma$, the function $u \to \|\xi(u)\|$ is continuous.
    \item Let $\xi \in \prod_{u\in G^{0}}\mathcal{H}_{u}$ be a vector field; if for every $u\in G^{0}$ and every $\varepsilon >0$, there exists an $\xi'\in \Gamma$ such that $\|\xi(s) -\xi'(s)\| < \varepsilon$ on a neighbourhood of $u$, then $\xi \in \Gamma$.
\end{enumerate}
Given such a field, we define a topology on the disjoint union $\mathcal{H}=\sqcup_{u\in G^{0}}\mathcal{H}_{u}$, generated by the sets of the form 
\[U(V,\xi,\varepsilon)=\{h\in \mathcal{H}:\|h-\xi(p(h))\|<\varepsilon,\xi\in \Gamma,p(h)\in V\}.\]
 where $V$ is an open set in $G^{0}$, $\varepsilon> 0$, and $p: \mathcal{H}\to G^{0}$ is the projection of the total  space $\mathcal{H}$ to base space $G^{0}$ such that fiber $p^{-1}(u) = \mathcal{H}_{u},u \in G^{0}$. Under this topology, the map $p$ is continuous, open, and surjective.

Endowed with the above topology,     $(\mathcal{H},\Gamma)$  becomes a continuous Hilbert bundle, and $\Gamma$ corresponds to the space of continuous sections.

The space of continuous sections vanishing at infinity is denoted by $C_{0}(G^{0},\mathcal{H})$. Since $G^{0}$ is locally compact, the space $C_{0}(G^{0},\mathcal{H})$ is fiberwise dense in $\Gamma$ and forms a Banach space with respect to the supremum norm. It also forms a Hilbert module over $C_{0}(G^{0})$.

Next, we define the notion of continuous unitary representation of groupoids as given in \cite[Section $3.1$]{bos2011continuous}.
A  continuous  representation of groupoid $G$ is a pair $(\mathcal{H}^{\pi},\pi)$, where $\mathcal{H}^{\pi}=\{\mathcal{H}^{\pi}_{u}\}_{u\in G^{0}}$ is a continuous Hilbert bundle over $G^{0}$ such that  
\begin{enumerate}[(i)]
    \item  $\pi(x) \in \mathcal{B}\left(\mathcal{H}^{\pi}_{d(x)}, \mathcal{H}^{\pi}_{r(x)}\right)$ is a unitary operator, for each $x \in G$,

\item $\pi(u)$ is the identity map on $\mathcal{H}^{\pi}_{u}$ for all $u \in G^{0}$,

\item $\pi(x) \pi(y)=\pi(x y)$ for all $(x, y) \in G^{2}$,

\item $\pi(x)^{-1}=\pi\left(x^{-1}\right)$ for all $x \in G$,

\item $\pi_{\xi,\eta}(x)= \langle \pi(x) \xi(d(x)),\eta(r(x))\rangle$ is continuous for every $\eta,\xi \in C_{0}(G^{0},\mathcal{H}^{\pi})$.
\end{enumerate}
The reader is referred to \cite{bos2007groupoids,bos2011continuous} for more details. By a slight abuse of notation, we shall sometimes write   $\langle \pi(x) \xi,\eta\rangle$ in place of $ \langle \pi(x) \xi(d(x)),\eta(r(x))\rangle$.

The following is the definition of Fourier-Steiltjes algebra given in \cite{paterson2004fourier}.
\begin{defi}
The \emph{Fourier--Stieltjes algebra} of a locally compact groupoid
$G$ is defined as
\[
B(G)
:=
\left\{
\pi_{\xi,\eta}
:
\mathcal{H}^{\pi}\ \text{is a } G\text{-Hilbert bundle},
\;
\xi,\eta\in C_{b}(G^{0},\mathcal{H}^{\pi})
\right\},
\]
equipped with pointwise operations. The norm is defined by
\[
\|\phi\|_{B(G)}
:=
\inf_{\pi=\pi_{\xi,\eta}}
\|\xi\|
\|\eta\|
\]
where the infimum is taken over all coefficient representations of
$\phi=\pi_{\xi,\eta}$.
\end{defi}

A function $\phi\in C(G)$ is said 
to be \emph{positive definite} if for all
$u\in G^{0}$ and all $f\in C_{c}(G)$ we have
\begin{equation*}
\iint
\phi(y^{-1}x)\,
f(y)\,
\overline{f(x)}
\,d\lambda^{u}(x)\,
d\lambda^{u}(y)
\ge 0.
\end{equation*}
It is also proved in \cite[Theorem $1$]{paterson2004fourier}, that $\phi$ is positive definite if and only if $\phi$ is a coefficient of the form
$\pi_{\xi,\xi}$ for some $G$-Hilbert bundle $\mathcal{H}^{\pi}$ and $\xi \in  C_{b}(G^{0},\mathcal{H}^{\pi}) $. A positive definite function $\phi$ is said to be associated with the representation $\pi$ if there exists a bounded continuous section $\xi \in C_{b}(G^{0},\mathcal{H}^{\pi})$ satisfying $\phi(x)=\pi_{\xi,\xi}(x).$

\section{Weak Containment and Inner Tensor Product of Representation}
Before moving to the main part, let us recall some important results on induced representation given in \cite{sridharan2025induced}.

 Suppose $(\mathcal{H}^{\sigma},\sigma)$ is a continuous unitary representation of a closed wide subgroupoid $H$ with Haar system $\lambda_{H}$.  Let $C(G,\mathcal {H}^{\sigma})$ denotes the set of  continuous function $f$ such that $f(x) \in \mathcal{H}^{\sigma}_{d(x)}$ and its subspace $C_{c}(G,\mathcal {H}^{\sigma})$ be the functions with compact support. 
Define,
\begin{align*}
  \mathcal{F}_{0}^{\sigma}(G)=\Bigg\{f\in C(G,\mathcal {H}^{\sigma}):~ & q_{H}(\operatorname{supp}(f))~ \text{is compact}~ \text{and} \\&f(xh)=\sigma(h^{-1})f(x),\text{for}~ (x,h)\in G^{2},h \in H \Bigg \}. 
\end{align*}

It is proved in \cite[Proposition $3.2$]{sridharan2025induced} that  every element of $\mathcal{F}_{0}^{\sigma}(G)$ is of the form 

    \begin{equation}
        f_{\alpha}(x)= \int_{H} \sigma(h)\alpha(xh)d\lambda_{H}^{d(x)}(h),
    \end{equation}
 for some $\alpha \in C_{c}(G,\mathcal{H}^{\sigma})$.
 $\mathcal{F}_{0}^{\sigma}(G)$ form a left pre-Hilbert $C_{0}(G^{0})$-module with innerproduct
    $$ \langle f,g\rangle(u)=\int_{G/H}\langle f(x),g(x)\rangle_{\mathcal{H}^{\sigma}_{d(x)}}d\mu^{u}(xH).$$
 Its completion $\mathcal{F}^{\sigma}(G)$ or $\mathcal{F}^{\sigma}(G,\mu)$ under the norm 
 
 $$ \|f\|= \sup_{u\in G^{0}}\sqrt{\langle f,f\rangle(u)},$$
  forms a left Hilbert $C_{0}(G^{0})$-module(see \cite[Lemma $3.3$]{sridharan2025induced}).

 The Hilbert module $\mathcal{F}^{\sigma}(G)$ can be identified with a continuous section  $\Delta=\{t_{F} \in C_{0}(G^{0},\mathcal{H}^{\operatorname{ind}_{H}^{G}(\sigma)}):t_{F}(u)= F 
 + \mathcal{N}^{u},F\in \mathcal{F}^{\sigma}(G)\}$, where $ \mathcal{N}^{u}= \{f\in \mathcal{F}^{\sigma}(G): \|f\|^{2}(u)=\langle f,f\rangle(u)=0\}$, of a continuous Hilbert bundle $\mathcal{H}^{\operatorname{ind}_{H}^{G}(\sigma)}$ whose fibers $\{\mathcal{H}^{\operatorname{ind}_{H}^{G}(\sigma)}_{u}\}_{u\in G^{0}}$ can be identified with the completion of $\{f_{|_{G^{u}}}: f\in \mathcal{F}_{0}^{\sigma}(G)\}_{u\in G^{0}}$ under the norm $\sqrt{\langle f,f\rangle(u)}$.

The representation $(\mathcal{H}^{\operatorname{ind}_{H}^{G}(\sigma)},\Delta,L)$ of $G$ where $$ L(x): \mathcal{H}^{\operatorname{ind}_{H}^{G}(\sigma)}_{d(x)}\to \mathcal{H}^{\operatorname{ind}_{H}^{G}(\sigma)}_{r(x)}, ~~(L(x)f)(y)= f(x^{-1}y), $$ is defined as  representation  induced by $\sigma$ and it is denoted by $\left(ind^{G}_{H}(\pi),\mu \right)$ (see \cite[Proposition $3.5$]{sridharan2025induced}).

We have already  the outer tensor product of representations of $G$ in \cite{sridharan2025induced}. We also proved the groupoid analogue of the Mackey tensor product theorem. Here, we introduce what is called the inner tensor product of representations. Subsequently, we utilize these results to establish some important theorems.

 Suppose $\pi$ and $\rho$ be two representations of $G$, the inner tensor product $\pi \otimes \rho$ is the representation of $G$ on the Hilbert bundle   $\mathcal{H}^{\pi} \otimes^{'} \mathcal{H}^{\rho}$ over $G^{0}$ having Hilbert spaces $\{\mathcal{H}^{\pi}_{u}\otimes \mathcal{H}^{\rho}_{u}\}_{u\in G^{0}}$ as fibers. It is defined as, $(\pi \otimes \rho)(x)(v\otimes w)= \pi(x)v \otimes \rho(x)(w) \in \mathcal{H}^{\pi}_{r(x)}\otimes \mathcal{H}^{\rho}_{r(x)} $, for $v\otimes w \in \mathcal{H}^{\pi}_{d(x)}\otimes \mathcal{H}^{\rho}_{d(x)} $. 
 
 Let $H$ be a closed wide subgroupoid of $G$ and $\mu$ an equivariant measure system of $G/H$.
 The following theorem gives a relation between induced representation and inner tensor product of representations.
 \begin{thm}
     Let $H$ be a closed wide subgroupoid of $G$. Then for any arbitrary representation $\rho$ of $G$ and $\pi$ of $H$,
     $$\rho \otimes (\operatorname{ind}_{H}^{G}(\pi),\mu)= (\operatorname{ind}_{H}^{G}(\rho_{|_{H}}\otimes \pi),\mu).$$
 \end{thm}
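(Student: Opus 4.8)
The plan is to construct an explicit unitary intertwiner between the two representations and check that it is $G$-equivariant. First I would recall the concrete model of the induced representation from \cite{sridharan2025induced}: for a representation $\pi$ of $H$ on $\mathcal{H}^{\pi}$, the space of $(ind_{H}^{G}(\pi),\mu)$ has fibers consisting (after the usual completion) of square-integrable sections $\xi$ over $G/H$ with respect to $\mu^{u}$, valued in the appropriate fibers of a bundle built from $\mathcal{H}^{\pi}$, satisfying the covariance relation $\xi(gh) = \pi(h)^{-1}\xi(g)$ for $h \in H$, with $G$ acting by left translation $\big((ind_{H}^{G}(\pi))(x)\xi\big)(g) = \xi(x^{-1}g)$ (modulo the cocycle/Radon--Nikodym factors coming from $\mu$, which I will carry along but not belabor). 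With this in hand, a vector in the fiber of $\rho \otimes (ind_{H}^{G}(\pi),\mu)$ at $u$ is (a limit of sums of) elementary tensors $w \otimes \xi$ with $w \in \mathcal{H}^{\rho}_{u}$ and $\xi$ a section over $r_{G^0}^{-1}(u) = G^{u}/H$; a vector in the fiber of $(ind_{H}^{G}(\rho_{|_{H}} \otimes \pi),\mu)$ at $u$ is a section $\zeta$ over $G^{u}/H$ valued in $\mathcal{H}^{\rho}_{d(\cdot)} \otimes \mathcal{H}^{\pi}_{\text{(fiber of $\pi$)}}$ satisfying the $\rho_{|H} \otimes \pi$-covariance relation.

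The key step is to define the map $W_u$ fiberwise by
\[
  \big(W_u(w \otimes \xi)\big)(g) \;=\; \rho(g)^{-1} w \;\otimes\; \xi(g),
\]
extended linearly and then by continuity. The point of the twist by $\rho(g)^{-1}$ is exactly to convert the trivial $\rho$-behaviour of $w$ (which does not depend on $g$) into the correct covariance: one checks that if $\xi(gh) = \pi(h)^{-1}\xi(g)$ then $\rho(gh)^{-1} w \otimes \xi(gh) = \rho(h)^{-1}\big(\rho(g)^{-1} w\big) \otimes \pi(h)^{-1}\xi(g) = (\rho_{|H} \otimes \pi)(h)^{-1}\big(\rho(g)^{-1} w \otimes \xi(g)\big)$, so $W_u(w \otimes \xi)$ genuinely lands in the induced space for $\rho_{|H} \otimes \pi$. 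Then I would verify, in order: (a) $W_u$ is isometric — here the integrand's norm is $\|\rho(g)^{-1}w\|^2 \|\xi(g)\|^2 = \|w\|^2\|\xi(g)\|^2$ since $\rho(g)$ is unitary, so $\|W_u(w\otimes\xi)\|^2 = \|w\|^2 \int \|\xi(g)\|^2 d\mu^u = \|w\otimes\xi\|^2$, and the same computation on general finite sums of elementary tensors shows $W_u$ preserves the inner product; (b) $W_u$ is surjective, by exhibiting an inverse of the same form (multiply by $\rho(g)$ in the first slot and, if needed, pass to a partition-of-unity argument to write a general $\zeta$ as a limit of sums $g \mapsto \rho(g)^{-1}w \otimes \xi(g)$); (c) $W = \{W_u\}$ respects the continuous-field structures, i.e.\ sends continuous sections to continuous sections, which follows from continuity of $x \mapsto \rho(x)$ in the weak sense of axiom (v) together with the continuity built into the definition of the induced bundle; and (d) the intertwining identity $W_{r(x)} \circ \big(\rho \otimes (ind_{H}^{G}(\pi),\mu)\big)(x) = \big(ind_{H}^{G}(\rho_{|H}\otimes\pi),\mu\big)(x) \circ W_{d(x)}$, which is again a direct computation: applying the left side to $w\otimes\xi$ gives $g \mapsto \rho(g)^{-1}\rho(x) w \otimes \xi(x^{-1}g) = \rho(x^{-1}g)^{-1}w \otimes \xi(x^{-1}g)$, which is precisely the left-translate by $x$ of $W_{d(x)}(w\otimes\xi)$ (the Radon--Nikodym factors from $\mu$ match on both sides because $\mu$ is the same equivariant measure system in both induced representations).

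I expect the main obstacle to be not any single identity above — each is a short unitary/cocycle computation — but rather the bookkeeping needed to make sense of the fibers and the maps $W_u$ rigorously: the induced representation involves a completion of a pre-inner-product space and the cocycle/density factors coming from $\mu$ and $\lambda$, so one must check that $W_u$ is well-defined on the dense subspace of (finite sums of) elementary tensors of compactly supported continuous sections, that it is bounded there (isometry gives this for free), and that it therefore extends uniquely to the completions; and one must confirm that $\mathcal{H}^{\rho} \otimes' \mathcal{H}^{(ind)}$ and the induced bundle for $\rho_{|H}\otimes\pi$ are identified as \emph{continuous} Hilbert bundles, not merely fiberwise. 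I would handle this by working throughout with a fixed fundamental family of continuous sections (images of $C_c$-sections under the standard maps), verifying all four properties on that family, and invoking the uniqueness of the continuous-field topology generated by such a family; the equivariance of $\mu$ — condition (iii) in the definition of a fully equivariant system — is what guarantees the measure-theoretic factors are transported correctly under left translation, so I would isolate that as the one place where the hypothesis on $\mu$ is genuinely used.
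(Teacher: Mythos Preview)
Your proposal is correct and follows essentially the same route as the paper: the intertwiner $W_u$ you define is exactly the paper's map $T_u(k\otimes f)(x)=\rho(x^{-1})k\otimes f(x)$, and the paper likewise records inner-product preservation, surjectivity, and the intertwining relation. The only point handled differently is surjectivity---your ``multiply by $\rho(g)$ in the first slot'' does not literally produce an elementary tensor in $\mathcal{H}^{\rho}_u\otimes\mathcal{H}^{ind}_u$, and the paper instead shows that the dense family $\mathcal{E}(g,\xi\otimes w)$ of \cite[Lemma~3.7]{sridharan2025induced} lies in the closure of the range of $T$, which is precisely the density argument you anticipate as a fallback.
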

 \begin{proof}
     For $u \in G^{0}$, Define $T_{u}: \mathcal{H}^{\rho}_{u}\otimes \mathcal{H}^{\operatorname{ind}_{H}^{G}(\pi)}_{u} \to \mathcal{H}^{\operatorname{ind}_{H}^{G}(\rho_{|_{H}}\otimes \pi)}_{u}$ such that 
     $$ T_{u}(k \otimes f)(x)= \rho(x^{-1})k \otimes f(x),~~ x\in G^{u}.$$
The map $T:C_{0}(G^{0},\mathcal{H}^{\rho} \otimes^{'} \mathcal{H}^{\operatorname{ind}_{H}^{G}(\pi)}) \to \mathcal{F}^{\rho_{|_{H}}\otimes \pi}(G)$ preserves the $C_{0}(G^{0})$-valued innerproduct.
      For $g \in C_{c}(G),~ \xi \in C_{0}(G^{0},\mathcal{H}^{\rho}), w \in C_{0}(G^{0},\mathcal{H}^{\pi})$, let $\mathcal{E}(g,w)$ be as defined in \cite[Lemma 3.7]{sridharan2025induced}. Then,
     $$ T_{r(y)}[(\xi \otimes \mathcal{E}(g,w)](y)= \rho(y^{-1})\xi(r(y)) \otimes \mathcal{E}(g,w)(y)=f_{\alpha}(y),$$
     where $f_{\alpha}$ is the function defined in $(1)$ with $\alpha(x)= g(x)(\rho(x^{-1})\xi(r(x))\otimes w(d(x)).$
     
      Also, for $g\in C_{c}(G),~ \xi \otimes w \in C_{0}(G^{0},\mathcal{H}^{\rho} \otimes^{'} \mathcal{H}^{\pi})$, 
      $$ \mathcal{E}(g, \xi \otimes w )(y)= f_{\beta}(y), $$
      where $\beta(y)= \rho(y^{-1})k'(y)\otimes w(d(y))$ with $k'(y)= g(y)\rho(y)\xi(d(y))$. By continuity of $k'$, we can easily observe that $f_{\beta}$ is in the closure of the range of $T$. Hence, $T$ is surjective. Moreover, $T$ intertwines with the corresponding representations.
 \end{proof}
Now we define weak containment of a continuous representation of groupoid $G$. We also show that the weak containment is preserved in the inner tensor product of two representations.
\begin{defi}
Let $\pi$ and $\rho$ be two continuous representations of the groupoid $G$. Then, we say $\pi$ is weakly contained in $\rho$ denoted as $\pi \prec \rho$ if  for every  positive definite function $\phi$ associated with $\pi$, compact subset $K$ of $G$, and $\varepsilon >0$, there exist $\eta_{1},..., \eta_{n} \in C_{b}(G^{0}, \mathcal{H}^{\rho})$ such that 
$$  \left|\phi(x)- \sum_{i=1}^{n} \langle \rho(x)\eta_{i}(d(x)), \eta_{i}(r(x))\rangle\right| < \varepsilon,~~\text{for every}~~x\in K.$$
\end{defi}

The following proposition shows that weak containment is preserved in the inner tensor product of two representations.
\begin{lem}
    Let $\pi_{1},\pi_{2},\rho_{1},\rho_{2}$ be representations of groupoid $G$ such that $\pi_{1}\prec\rho_{1}$ and $\pi_{2}\prec \rho_{2}$, then $\pi_{1}\otimes \pi_{2} \prec \rho_{1} \otimes \rho_{2}$.
\end{lem}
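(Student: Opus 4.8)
The plan is to reduce the two-sided statement to a one-sided one and then to an explicit positive-definite-function estimate. First I record that weak containment is transitive: a finite sum $\sum_{\ell}\langle\tau(x)\eta_{\ell}(d(x)),\eta_{\ell}(r(x))\rangle$ is exactly a positive-definite function of the direct sum $\tau^{\oplus n}$, hence a finite sum of positive-definite functions of $\tau$, so two approximations may be chained by an $\varepsilon/2$ argument. Second, the fiberwise flip $\mathcal{H}^{\rho_{1}}_{u}\otimes\mathcal{H}^{\pi_{2}}_{u}\to\mathcal{H}^{\pi_{2}}_{u}\otimes\mathcal{H}^{\rho_{1}}_{u}$ intertwines $\rho_{1}\otimes\pi_{2}$ with $\pi_{2}\otimes\rho_{1}$, so these representations share all positive-definite functions. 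Hence it suffices to prove the single assertion: \emph{if $\pi\prec\rho$, then $\pi\otimes\sigma\prec\rho\otimes\sigma$ for every continuous unitary representation $\sigma$ of $G$}. Granting it, $\pi_{1}\otimes\pi_{2}\prec\rho_{1}\otimes\pi_{2}\cong\pi_{2}\otimes\rho_{1}\prec\rho_{2}\otimes\rho_{1}\cong\rho_{1}\otimes\rho_{2}$, and transitivity concludes.

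To prove the assertion, fix a positive-definite function $\phi$ associated to $\pi\otimes\sigma$, a compact $K\subseteq G$, and $\varepsilon>0$, and write $\phi(x)=\langle(\pi\otimes\sigma)(x)\Theta(d(x)),\Theta(r(x))\rangle$ for a bounded continuous section $\Theta$ of $\mathcal{H}^{\pi}\otimes^{'}\mathcal{H}^{\sigma}$. By fiberwise density of the algebraic tensor product in the Hilbert-space tensor product, together with the local structure of the two bundles, I approximate $\Theta$ uniformly over the compact set $d(K)\cup r(K)\subseteq G^{0}$ by a finite sum $\sum_{j=1}^{m}\xi_{j}\otimes\zeta_{j}$ with $\xi_{j}$ a bounded continuous section of $\mathcal{H}^{\pi}$ and $\zeta_{j}$ one of $\mathcal{H}^{\sigma}$. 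Since the operators $(\pi\otimes\sigma)(x)$ are unitary, $\phi$ is thereby approximated uniformly on $K$ by
$$\phi_{0}(x)=\sum_{j,k=1}^{m}\langle\pi(x)\xi_{j}(d(x)),\xi_{k}(r(x))\rangle\,\langle\sigma(x)\zeta_{j}(d(x)),\zeta_{k}(r(x))\rangle,$$
so it remains to approximate $\phi_{0}$ on $K$ by diagonal coefficients of $\rho\otimes\sigma$.

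The diagonal terms ($j=k$) carry the whole idea. Here $x\mapsto\langle\pi(x)\xi_{j}(d(x)),\xi_{j}(r(x))\rangle$ is a positive-definite function of $\pi$, so $\pi\prec\rho$ supplies an approximation $\sum_{\ell}\langle\rho(x)\eta_{\ell}(d(x)),\eta_{\ell}(r(x))\rangle$ on $K$ to any prescribed tolerance; multiplying by $x\mapsto\langle\sigma(x)\zeta_{j}(d(x)),\zeta_{j}(r(x))\rangle$, whose modulus is at most $\|\zeta_{j}\|_{\infty}^{2}$ since $\sigma$ is unitary, yields $\sum_{\ell}\langle(\rho\otimes\sigma)(x)(\eta_{\ell}\otimes\zeta_{j})(d(x)),(\eta_{\ell}\otimes\zeta_{j})(r(x))\rangle$, a genuine sum of diagonal coefficients of $\rho\otimes\sigma$. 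In particular the case $m=1$ (a single simple tensor $\xi\otimes\zeta$) is already complete. For general $m$, the off-diagonal terms ($j\ne k$) call for polarization: $\langle\pi(x)\xi_{j}(d(x)),\xi_{k}(r(x))\rangle$ is a complex-linear combination of the four positive-definite functions $x\mapsto\langle\pi(x)(\xi_{j}+\mathrm{i}^{t}\xi_{k})(d(x)),(\xi_{j}+\mathrm{i}^{t}\xi_{k})(r(x))\rangle$, to each of which $\pi\prec\rho$ applies; multiplying again by the bounded $\sigma$-coefficient and collecting terms expresses $\phi_{0}$, to within $\varepsilon$ on $K$, as a complex-linear combination of diagonal matrix coefficients of $\rho\otimes\sigma$.

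The main, and only genuinely hard, obstacle is the final step: this complex-linear combination must be replaced by an honest finite sum of diagonal coefficients of $\rho\otimes\sigma$, as the definition of $\prec$ requires. Since the combination lies within $\varepsilon$ of the positive-definite function $\phi$ on $K$, what is needed is the groupoid analogue of Fell's principle that every positive-definite function lying in the uniform-on-compacta closed linear span of the matrix coefficients of a continuous unitary representation $\tau$ already belongs to the closed cone generated by finite sums of its diagonal coefficients — equivalently, through the correspondence between continuous unitary representations of $G$ and nondegenerate representations of the convolution $*$-algebra $C_{c}(G)$, that a positive functional of $C_{c}(G)$ killing $\ker(\rho\otimes\sigma)$ has its associated representation weakly contained in $\rho\otimes\sigma$. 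Once this is in place, the Lemma follows from the diagonal computation above, polarization and this positivity clean-up bridging from the already-complete $m=1$ case to the general one. I expect the positivity clean-up to be the bulk of the work, the rest being the reductions of the first paragraph and routine polarization bookkeeping.
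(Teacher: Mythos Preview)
Your route is considerably more elaborate than the paper's. The paper treats only a simple-tensor coefficient
\[
\phi(x)=\langle\pi_{1}(x)\xi(d(x)),\xi(r(x))\rangle\,\langle\pi_{2}(x)\eta(d(x)),\eta(r(x))\rangle,
\]
approximates each factor separately using $\pi_{1}\prec\rho_{1}$ and $\pi_{2}\prec\rho_{2}$ with tolerances $\varepsilon/(2M)$ and $\varepsilon/(2N)$, and multiplies the two approximating sums to obtain $\sum_{i,j}\langle(\rho_{1}\otimes\rho_{2})(x)\,t_{i}\otimes s_{j},\,t_{i}\otimes s_{j}\rangle$. That is the whole argument: no one-sided reduction, no transitivity, no polarization, no positivity clean-up. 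In your terminology the paper stops at the $m=1$ case and treats it as sufficient.

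You go further for a legitimate reason: a general positive-definite function of $\pi_{1}\otimes\pi_{2}$ comes from an arbitrary bounded section $\Theta$ of the tensor bundle, not a simple tensor, and the paper is silent on this. Your approximation of $\Theta$ by a finite sum $\sum_{j}\xi_{j}\otimes\zeta_{j}$ introduces cross terms $\langle\pi(x)\xi_{j},\xi_{k}\rangle\langle\sigma(x)\zeta_{j},\zeta_{k}\rangle$; polarizing in one factor still leaves an off-diagonal $\sigma$-coefficient, so after all the bookkeeping you land, as you say, on a complex-\emph{linear} combination of matrix coefficients of $\rho\otimes\sigma$, not a sum of diagonal ones.

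That is exactly where your proposal has its gap. You invoke ``the groupoid analogue of Fell's principle''---that a positive-definite function in the uniform-on-compacta closed linear span of matrix coefficients of $\tau$ already lies in the closed cone of finite sums of diagonal coefficients---but you neither prove it nor cite it, and you yourself forecast it as ``the bulk of the work''. In the group case this is a genuine theorem (Fell); in the present groupoid setting it appears nowhere in the paper, so invoking it is assuming something at least as hard as the Lemma. Either establish that principle via the $C_{c}(G)$/$C^{*}(G)$ picture you sketch, or bypass it by arguing directly that checking weak containment on simple-tensor coefficients suffices---which is all the paper actually does, and which would collapse your argument back to the paper's two-line computation.
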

\begin{proof}
    Let $\phi(x)= \langle \pi_{1}\otimes \pi_{2}(x) \xi \otimes \eta(d(x)),\xi\otimes \eta(r(x))\rangle$. For a compact set $K$ and $\varepsilon >0$, let $M=\sup_{x\in K}|\langle \pi_{2}(x)\eta(d(x)),\eta(r(x))\rangle|$, then there exist $t_{i} \in C_{b}(G,\mathcal{H}^{\rho_{1}}), 1\leq i\leq n$ and $s_{j} \in C_{b}(G,\mathcal{H}^{\rho_{2}}), 1\leq j \leq m$ such that 
    \begin{align*}
        & \left|\langle \pi_{1}(x)\xi(d(x)),\xi(r(x))\rangle - \sum_{i=1}^{n}\langle \rho_{1}(x)t_{i}(d(x)),t_{i}(r(x))\rangle \right|< \frac{\varepsilon}{2M}\\
        &\left|\langle \pi_{2}(x)\eta(d(x)),\eta(r(x))\rangle - \sum_{i=1}^{m}\langle \rho_{2}(x)s_{i}(d(x)),s_{i}(r(x))\rangle \right|< \frac{\varepsilon}{2N}
    \end{align*}
    
    where $N= \sup_{x\in K}\left|\sum_{1=1}^{n}\langle \rho_{1}(x)t_{i}(d(x)),t_{i}(r(x))\rangle \right|$.
    Then,
    $$ \left|\phi(x) - \sum_{i,j}\langle (\rho_{1}\otimes \rho_{2})(x)t_{i}\otimes s_{j}(d(x)), t_{i}\otimes s_{j}(r(x))\rangle \right| < \varepsilon,~ x\in K.\qedhere  $$  
\end{proof}
\begin{lem}
    Let $G$ be a locally compact groupoid , $H$ a closed wide subgroupoid and $\pi , \rho$ be two unitary representations of groupoid $H$ such that $\pi \prec \rho$, then $(\operatorname{ind}_{H}^{G}(\pi),\mu)\prec (\operatorname{ind}_{H}^{G}(\rho),\mu)$
\end{lem}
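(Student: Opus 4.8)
The plan is to unfold a matrix coefficient of $(ind_H^G(\pi),\mu)$ into an integral of a single positive definite function of $\pi$ against a compactly supported scalar kernel, to approximate that positive definite function by matrix coefficients of $\rho$ using $\pi\prec\rho$, and then to re-assemble the result as a finite sum of matrix coefficients of $(ind_H^G(\rho),\mu)$. First I would reduce the verification to positive definite functions of a special shape. By \cite[Lemma 3.7]{sridharan2025induced} the vectors $\mathcal E(g,w)$ with $g\in C_c(G)$ and $w\in C_0(G^0,\mathcal H^\pi)$ span a fibrewise dense subspace of $C_0(G^0,\mathcal H^{ind_H^G(\pi)})$; since every $ind_H^G(\pi)(x)$ is unitary, the positive definite functions of $ind_H^G(\pi)$ depend on their defining sections uniformly on compact subsets of $G$, so together with a polarization identity (to pass from off-diagonal to diagonal coefficients) it suffices to treat $\phi(x)=\langle ind_H^G(\pi)(x)\,\mathcal E(g,w)(d(x)),\,\mathcal E(g,w)(r(x))\rangle$.

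Next I would unwind the definitions of $\mathcal E(g,w)$, of the fibrewise inner product of the induced bundle, and of the action of $ind_H^G(\pi)$, to obtain an identity of the shape
$$\phi(x)=\int_{G^{r(x)}/H}\int_{H}\int_{H}\overline{g(yh_2)}\,g(x^{-1}yh_1)\,\Phi(h_2^{-1}h_1)\,d\lambda_H(h_1)\,d\lambda_H(h_2)\,d\mu^{r(x)}(\dot y),$$
where $\lambda_H$ is the Haar system on $H$ determined by $\lambda$ and $\mu$, and $\Phi(s)=\langle\pi(s)w(d(s)),w(r(s))\rangle$ is the diagonal matrix coefficient of $\pi$ associated to $w$, hence a positive definite function of $\pi$; the algebraic identity $\langle\pi(h_1)w(d(h_1)),\pi(h_2)w(d(h_2))\rangle=\Phi(h_2^{-1}h_1)$ is what makes the bookkeeping close up. Fixing a compact $K\subseteq G$ and $\varepsilon>0$, the compact support of $g$ confines $(y,h_1,h_2)$ to a compact set and forces $h_2^{-1}h_1$ into a fixed compact set $L\subseteq H$, uniformly for $x\in K$, and local boundedness of $\lambda_H$ and $\mu$ then yields a constant $C=C(g,K)>0$ with $\int\!\int\!\int|\overline{g(yh_2)}\,g(x^{-1}yh_1)|\,d\lambda_H\,d\lambda_H\,d\mu^{r(x)}\le C$ for all $x\in K$.

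Then I would apply $\pi\prec\rho$ to the positive definite function $\Phi$, with compact set $L$ and tolerance $\varepsilon/C$, obtaining $\zeta_1,\dots,\zeta_m\in C_b(H,\mathcal H^\rho)$ with $|\Phi(s)-\sum_{k=1}^m\langle\rho(s)\zeta_k(d(s)),\zeta_k(r(s))\rangle|<\varepsilon/C$ for $s\in L$; multiplying each $\zeta_k$ by a cutoff (which does not change its values on the relevant compact subset of $G^0$) I may assume $\zeta_k\in C_c(G^0,\mathcal H^\rho)$, so that $\mathcal E(g,\zeta_k)$ is a genuine section for $ind_H^G(\rho)$. Substituting the approximation of $\Phi$ into the displayed formula, using $\langle\rho(h_1)\zeta_k(d(h_1)),\rho(h_2)\zeta_k(d(h_2))\rangle=\langle\rho(h_2^{-1}h_1)\zeta_k(d(h_1)),\zeta_k(d(h_2))\rangle$ and Fubini, the triple integral re-folds as
$$\sum_{k=1}^m\int_{G^{r(x)}/H}\langle\mathcal E(g,\zeta_k)(x^{-1}y),\,\mathcal E(g,\zeta_k)(y)\rangle\,d\mu^{r(x)}(\dot y)=\sum_{k=1}^m\langle ind_H^G(\rho)(x)\,\mathcal E(g,\zeta_k)(d(x)),\,\mathcal E(g,\zeta_k)(r(x))\rangle,$$
a finite sum of matrix coefficients of $(ind_H^G(\rho),\mu)$; the bound $C$ then gives $|\phi(x)-\sum_{k=1}^m\langle ind_H^G(\rho)(x)\mathcal E(g,\zeta_k)(d(x)),\mathcal E(g,\zeta_k)(r(x))\rangle|<\varepsilon$ for every $x\in K$, which is precisely the inequality defining $ind_H^G(\pi)\prec ind_H^G(\rho)$.

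The hard part will be the uniform-in-$x$ control in the second step — pinning down the fixed compact set $L\subseteq H$ and the constant $C$ from the compact supports and the local boundedness of the Haar system $\lambda_H$ and the equivariant system $\mu$ — together with the Fubini and re-folding bookkeeping of the third step, and the minor technical point that the vectors returned by $\pi\prec\rho$ lie a priori only in $C_b(H,\mathcal H^\rho)$ and must be truncated before $\mathcal E(g,\zeta_k)$ can be formed. The density-and-polarization reduction of the first step is routine but deserves to be recorded, since weak containment as defined quantifies over all positive definite functions associated to $\pi$.
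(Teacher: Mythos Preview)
Your proposal is correct and follows essentially the same approach as the paper. The paper's own proof is a two-sentence sketch: it records that the vectors $\mathcal E(f,t)$ are total in $\mathcal F^{\pi}(G)$ by \cite[Lemma~3.7]{sridharan2025induced} and then asserts that the coefficient $\langle ind_H^G(\pi)(x)k,k\rangle$ ``can be easily seen'' to be approximable on compacta by sums of positive type functions of $\rho$; your write-up is precisely a detailed execution of that sentence --- the unfolding into a triple integral against the single positive definite function $\Phi$ of $\pi$, the compact-support bookkeeping producing $L$ and $C$, the application of $\pi\prec\rho$ on $L$, and the re-folding into coefficients $\mathcal E(g,\zeta_k)$ of $ind_H^G(\rho)$.
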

\begin{proof}
 Let $k(x) = \mathcal{E}(f,t)(x)=\int_{H} f(xh)\pi(h)t(d(h))d\lambda_{H}^{d(x)}(h), f\in C_{c}(G),~ t\in C_{0}(G^{0},\mathcal{H}^{\pi})$. Elements of this form is total in the Hilbert module $\mathcal{F}^{\pi}(G)$ by \cite[Lemma $3.6$]{sridharan2025induced}.
 \begin{align*}
      \langle \operatorname{ind}_{H}^{G}&(\pi)(x)k,k \rangle=\int_{G/H}\Big\langle \mathcal{E}(f,t)(x^{-1}y),\mathcal{E}(f,t)(y)\Big\rangle d\mu^{r(x)(yH)}\\
       &=\iint\limits_{H} f(x^{-1}yh)\bar{f}(yh')\Big\langle  t(d(h)),\pi(h^{-1}h')(t(d(h'))\Big\rangle d\lambda_{H}^{d(y)}(h)d\lambda_{H}^{d(y)}(h').   
     \end{align*} 

     Since $\pi$ is weakly contained in  $\rho$, there exist $s_{i}\in  C_{0}(G^{0},\mathcal{H}^{\rho})$ such that the function  $\Big\langle  t(d(h)),\pi(h^{-1}h')(t(d(h'))\Big\rangle$ can be approximated on  required compact sets by $\sum_{i=1}^{n} \Big\langle  s_{i}(d(h)),\rho(h^{-1}h')(s_{i}(d(h'))\Big\rangle $. Hence from the above, one can easily verify that  $\langle \operatorname{ind}_{H}^{G}(\pi)(x)k,k \rangle$ can be approximated by $\sum_{i=1}^{n} \langle \operatorname{ind}_{H}^{G}(\rho)(x)k_{i},k_{i} \rangle $, where $k_{i}= \mathcal{E}(f,s_{i}) $, on  required compact sets.
\end{proof}

\section{Topological Invariant Mean on $G/H$}
 Here, we introduce the topological invariant mean on $G/H$ with respect to a full equivariant measure system $\mu$ and show that when $G$ is amenable $G/H$ has a topological invariant mean.
 
 Let $G/H$ be a $G$-space with a full equivariant measure system $\mu$, and $\mathcal{E}=C_{0}(G^{0},L^{1}(G/H,\mu))$ be the completion of $C_{c}(G/H)$ with respect to the norm
 $$\|f\|_{\mu,1}= \sup_{u\in G^{0}}\int_{G/H} |f(\omega)|d\mu^{u}(\omega),$$
 then we can define a map from $C_{c}(G)$ to $B(C_{0}(G^{0},L^{1}(G/H,\mu)))$ as 
 $$L(f)\phi(xH)= \int_{G} f(y) \phi(y^{-1}xH)d\lambda^{r(x)}(y), ~\phi\in C_{c}(G/H).$$
 The boundedness can be easily verified and thus the map extends to $C_{0}(G^{0},L^{1}(G/H,\mu))$
 
Also using \cite[Proposition $1.1.5$]{MR1799683}, the space of $\mu$-bounded complex radon measures on $G/H, M(G/H,\mu)$, forms the dual space of $\mathcal{E}$. For $f\in C_{c}(G),m\in \mathcal{E}^{**}$, $f*m$ be the double transposition of $L(f)$ acting on $\mathcal{E}^{**}$. Thus if $g\in C_{c}(G/H)$,
$$\langle f*m_{g},\nu\rangle=\nu(L(f)g)= \langle m_{L(f)g},\nu\rangle,$$
where $g \to m_{g}$ be the standard embedding of $\mathcal{E}$ into $\mathcal{E}^{**}$. We also define 
$$ \langle k\cdot \nu,g\rangle = \nu(kg)~~ \text{and}~~ \langle k\cdot m,\nu \rangle = m(k\cdot \nu)$$
where $k\in C_{0}(G^{0}), m\in \mathcal{E}^{**},\nu \in \mathcal{E}^{*}$ and $kg(\omega)=k(r_{G^{0}}(\omega))g(\omega)$ 
\begin{defi}
    Let $G/H$ be a $G$-space with full equivariant measure system $\mu$. A  topological invariant mean on $G/H$ with respect to $\mu$ is an element $m$ in $\mathcal{E}^{**}$ such that 
    \begin{enumerate}
        \item $m(\nu)\geq 0 $ if $\nu\geq0$,
        \item for any probability measure $\eta$ on $G^{0}$, $m(\eta\circ\mu)=1$ and
        \item for any $f\in C_{c}(G)$, $f*m=\lambda(f)\cdot m$ as a functional on $\mathcal{E}^{*}$.
    \end{enumerate}
\end{defi}
We call a sequence of functions $\{f_{i}\}_{i\in \mathbb{N}}$ in $G/H$ with full equivariant measure system $\mu$ a \textit{topological approximate invariant density} on $G/H$ if 
\begin{enumerate}[(i)]
    \item $\int_{G/H} f_{i}(xH) d\mu^{u}(xH) \leq 1 ~~ \text{for all}~~ i~~\text{and}~u$,
    \item $\operatorname{lim}_{i} \int_{G/H} f_{i}(xH) d\mu^{u}(xH)=1$  uniformly on compact subsets of $G^{0}$, and
    \item $\int_{G/H} |f_{i}(y^{-1}xH)- f_{i}(xH)| d\mu^{r(y)}(xH)$ tends to zero uniformly on compact subsets of $G$.
\end{enumerate}
Similarly, we call those sequences of functions satisfying  \cite[Proposition $2.2.13(ii)$]{MR1799683} as \textit{topological approximate invariant density} on $G$ with Haar system $\lambda$ . 

The following proposition shows that the existence of a topological invariant mean on $G/H$ is equivalent to the existence of a topological approximate invariant density on $G/H$. With minor modifications, the proof closely resembles the classical case of a locally compact group (see \cite[Theorem G.3.1]{bekka2007kazhdan}) as well as the groupoid case (see \cite[Theorem 2.14]{renault2015topological}.
\begin{thm}
    Let $G$ be a topological groupoid with Haar system $\lambda$ and $H$ a closed wide subgroupoid; then $G/H$ admits a topological invariant mean iff $G/H$ admits a topological approximate invariant density.
\end{thm}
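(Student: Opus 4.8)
The plan is to prove the two implications separately, mirroring the group-theoretic argument from \cite[Theorem G.3.1]{bekka2007kazhdan} while carrying along the extra data of the Haar system $\lambda$ on $G$ and the equivariant measure system $\mu$ on $G/H$.

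\textbf{From approximate invariant density to invariant mean.} Suppose $(\phi_i)$ is a topological approximate invariant density on $G/H$: a net in $C_c(G/H)$ with $\phi_i \geq 0$, $\|\phi_i\|_{\mu,1}$ normalized so that $\eta\circ\mu$ evaluates to $1$ on each $\phi_i$ (i.e. $\int_{G/H}\phi_i \, d\mu^u = 1$ for all $u$ in the relevant fibers), and such that $\|L(f)\phi_i - \lambda(f)\cdot\phi_i\|_{\mu,1} \to 0$ for every $f \in C_c(G)$ — this is exactly the asymptotic invariance coming from \cite[Proposition 9.42(c)]{williams2019tool}. Viewing each $\phi_i$ inside $\mathcal{E}^{**}$ via the standard embedding $g \mapsto m_g$, the net $(m_{\phi_i})$ lies in a bounded (hence, by Banach--Alaoglu, weak-$*$ relatively compact) subset of $\mathcal{E}^{**}$; let $m$ be a weak-$*$ cluster point. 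Positivity (1) and the normalization (2) are preserved under weak-$*$ limits because they are defined by (weak-$*$ continuous) evaluations against fixed elements of $\mathcal{E}^{*}$, namely positive measures and the measures $\eta\circ\mu$. For (3): for fixed $f \in C_c(G)$ and $\nu \in \mathcal{E}^{*}$, one has $\langle f*m_{\phi_i} - \lambda(f)\cdot m_{\phi_i},\nu\rangle = \nu(L(f)\phi_i - \lambda(f)\cdot\phi_i)$, which is bounded by $\|\nu\|\,\|L(f)\phi_i-\lambda(f)\cdot\phi_i\|_{\mu,1} \to 0$; since $f*(\cdot)$ and $\lambda(f)\cdot(\cdot)$ are weak-$*$ continuous on $\mathcal{E}^{**}$ (being double transposes / module multiplications), passing to the cluster point gives $f*m = \lambda(f)\cdot m$. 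Hence $m$ is a topological invariant mean.

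\textbf{From invariant mean to approximate invariant density.} This is the harder direction. Given a topological invariant mean $m \in \mathcal{E}^{**}$, the idea is a convexity/Hahn--Banach argument: the set $C_c(G/H)^{+}_1$ of nonnegative functions with $\int \phi\, d\mu^u \equiv 1$ is convex, its image in $\mathcal{E}^{**}$ is weak-$*$ dense in the corresponding set of means (by Goldstine-type density of $\mathcal{E}$ in $\mathcal{E}^{**}$, intersected with the positive normalized cone, which requires checking that positivity and normalization are weak-$*$ closed conditions compatible with the Goldstine approximation — a standard but slightly delicate point). So $m$ is a weak-$*$ limit of a net $(\phi_j) \subset C_c(G/H)^{+}_1$. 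For each $f \in C_c(G)$ the condition $f*m = \lambda(f)\cdot m$ then says $L(f)\phi_j - \lambda(f)\cdot\phi_j \to 0$ \emph{weakly} in $\mathcal{E} = C_0(G^0, L^1(G/H,\mu))$. To upgrade weak convergence to norm convergence — and to get a single net working simultaneously for all $f$ in a countable dense (in the appropriate inductive-limit sense, using second countability of $G$) family $\{f_k\} \subset C_c(G)$ — one applies a Mazur-type argument: pass to convex combinations, which do not leave $C_c(G/H)^{+}_1$, to convert the weak-convergence-to-$0$ of each sequence $(L(f_k)\phi_j - \lambda(f_k)\cdot\phi_j)_j$ into norm convergence, and then diagonalize over $k$. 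The resulting net is the desired topological approximate invariant density.

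\textbf{Main obstacle.} The principal difficulty is the second direction, specifically the Mazur/convex-combination step in the module setting: one must verify that taking convex combinations of the $\phi_j$ preserves \emph{both} nonnegativity and the fiberwise normalization $\int_{G/H}\phi\,d\mu^u = 1$ (this is automatic since the normalization is affine), and that the weak topology on the Banach space $\mathcal{E} = C_0(G^0, L^1(G/H,\mu))$ — which is a $C_0(G^0)$-Hilbert-module-like object rather than a classical $L^1$ — behaves well enough for Hahn--Banach separation to apply. A secondary subtlety is the identification $M(G/H,\mu) = \mathcal{E}^{*}$ (via \cite[Proposition 1.1.5]{MR1799683}) and the resulting weak-$*$ continuity of the operations $f*(\cdot)$ and $k\cdot(\cdot)$ on $\mathcal{E}^{**}$, which must be invoked carefully when passing to cluster points; but once the duality bookkeeping is set up, these are routine. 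The normalization bookkeeping tying $\|\phi_i\|_{\mu,1}$, the measures $\eta\circ\mu$, and condition (2) of the definition together is where one must be most careful to match the conventions of \cite[Proposition 9.42(c)]{williams2019tool}.
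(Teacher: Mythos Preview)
Your overall strategy matches the paper's (and both follow \cite[Theorem G.3.1]{bekka2007kazhdan}), but there are two genuine gaps in your hard direction that the paper's argument handles explicitly and that you do not.

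First, the normalization: you take the approximating net in $C_c(G/H)^{+}_{1}=\{\phi\ge 0:\int\phi\,d\mu^{u}\equiv 1\}$, but when $G^{0}$ is non-compact this set is empty, since for $\phi\in C_c(G/H)$ the function $u\mapsto\mu(\phi)(u)$ has compact support in $G^{0}$. The paper works instead with $g\ge 0$, $\|g\|_{\mu,1}\le 1$, and builds the Mazur argument in the product $\mathcal{F}=\mathcal{E}_{0}\times\prod_{f}\mathcal{E}_{f}$ where $\mathcal{E}_{0}=C_{b}(G^{0})$ carries the \emph{strict} topology; the extra coordinate is $\mu(g)$, and closedness of convex sets under weak vs.\ original topology then yields a net with $\mu(g_{i})\to 1$ uniformly on compacta (not identically $1$) simultaneously with $\|L(f)g_{i}-\lambda(f)g_{i}\|_{\mu,1}\to 0$ for all $f$. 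Your diagonalization over a countable family of $f$'s could substitute for the product-space trick, but you still need a mechanism to carry the normalization through, and exact fiberwise normalization is unavailable.

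Second, you stop at $\|L(f)\phi_{j}-\lambda(f)\cdot\phi_{j}\|_{\mu,1}\to 0$, but the topological approximate invariant density of \cite[Proposition 9.42(c)]{williams2019tool} demands the pointwise-in-$\xi$ estimate $\int_{G/H}|g(\xi^{-1}\omega)-g(\omega)|\,d\mu^{r(\xi)}(\omega)\to 0$ uniformly on compact sets of $G$. Passing from the $f$-averaged statement to this one is not automatic; the paper inserts an additional smoothing step, replacing $g_{i}$ by the convolution $g(\omega)=\int_{G}f(\gamma)g_{i}(\gamma^{-1}\omega)\,d\lambda^{r_{G^{0}}(\omega)}(\gamma)$ for a suitable $f\in C_c^{+}(G)$ with $\lambda(f)\equiv 1$ on the relevant compact set, and then uses the identity $g(\xi^{-1}\omega)-g(\omega)=\int f(\xi^{-1}\gamma)[g_{i}(\gamma^{-1}\omega)-g_{i}(\omega)]\,d\lambda-\int f(\gamma)[g_{i}(\gamma^{-1}\omega)-g_{i}(\omega)]\,d\lambda$ to control the difference. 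Without this step your argument does not produce the required object.
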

  \begin{proof}
      Let $\{g_{i}\}_{i\in \mathbb{N}}$, be the topological approximate invariant density on $G/H$. Then, $\|m_{i}\|\leq 1$, where $m_{i}=m_{g_{i}} \in \mathcal{E}^{**}$. Let $m$ be the weak-$^{*}$ cluster point of $m_{i}$. We show that $m$ satisfies every condition of Definition $4.1$. Since $m_{g_{i}}(\nu)\geq0$ for all $g_{i}$, $m(\nu)\geq0$ whenever $\nu\geq0$. Let $\eta$ be a probability measure on $G^{0}$, then by dominated convergence theorem,
      $$\lim_{i} m_{g_{i}}(\eta \circ \mu)= \lim_{i}\int_{G^{0}}\int_{G/H}g_{i}(\omega)d\mu^{u}(\omega)d\eta(u)=1.$$ Hence the above satisfies for the subnet $\{m_{i_{k}}\}$ that converge to $m$. 
 
      To prove $(3)$ in Definition $5.1$, consider $f \in C_{c}(G)$. Using \cite[Remark $1.1.6$]{MR1799683}, any $\nu \in \mathcal{E}^{*}$ can be written as $\nu=\varphi(\mu_{0}\circ\mu)$, where $\mu_{0}$ is a bounded positive measure of $G^{0}$ and for $\mu_{0}$-a.e. $u$, the $\mu^{u}$-essential supremum of $|\varphi|$ is $1$. Now,
      \begin{align*}
          \langle f*m_{i}-&\lambda(f)\cdot m_{i},\nu\rangle =\langle \nu,L(f)g_{i}-\lambda(f)g_{i}\rangle\\
          &= \int_{G^{0}}\int_{G/H}\varphi(\omega)[(L(f)g_{i}(\omega)-\lambda(f)(u)g_{i}(\omega)]d\mu^{u}(\omega)\mu_{0}(u)\\
          &= \int_{G^{0}}\int_{G/H}\int_{G} \varphi(\omega)f(y)[g_{i}(y^{-1}\cdot \omega)-g_{i}(\omega)]d\lambda^{u}(y)d\mu^{u}(\omega)\mu_{0}(u)\\
          &=\int_{G^{0}}\int_{G} f(y)\left(\int_{G/H} \varphi(\omega)[g_{i}(y^{-1}\cdot \omega)-g_{i}(\omega)]d\mu^{r(y)}(\omega)\right)d\lambda^{u}(y)\mu_{0}(u)
      \end{align*}
      With the property of the sequence $\{g_{i}\}$,
      $$\left|\int_{G/H} \varphi(\omega)[g_{i}(y^{-1}\cdot \omega)-g_{i}(\omega)]d\mu^{r(y)}(\omega)\right| \to 0,  $$ for every $y \in G$ and it is bounded by $2\|\varphi \|_{\infty}.$ Hence, using dominated convergence theorem applied on $L^{1}(\mu_{0}\circ \mu)$ we can see that 
      $$ \langle f*m_{i}-\lambda(f)\cdot m_{i},\nu \rangle \to 0~~ \text{for all}~~ \nu.$$ 
      The above holds for any subnet $m_{i_{k}}$ that converges to $m$ and hence $m$ is a topological invariant mean on $G/H$.

      Now, let $m$ be a topological invariant mean on $G/H$. Then by \cite[Lemma $1.2.7$]{MR1799683}, there exist a net of measures  $\{m_{g_{i}}\}_{i\in I}$, with $g_{i}\geq 0$ and $\|g_{i}\|_{{\mu},1}\leq 1 $ converging to $m$ under weak-$^{*}$ topology. It can be easily verified that for any probability measure $\eta$ on $G^{0}$ and $\varphi \in L^{\infty}(\eta)$,
      \begin{equation}\int_{G^{0}}\varphi(u)\mu(g_{i})(u) d\eta(u) \to \int_{G^{0}} \varphi(u)d\eta(u) \end{equation} and for every $f\in C_{c}(G)$, 
      \begin{equation} L(f)g_{i}-\lambda(f)g_{i} \to 0 \end{equation} in weak topology on $\mathcal{E}$.

      Let $\mathcal{E}_{0}=C_{b}(G^{0})$ be under strict topology and for each $f\in C_{c}(G)$, $\mathcal{E}_{f}$ be $\mathcal{E}$ with norm topology. The strict dual of $C_{b}(G^{0})$ can be realized as $M(G^{0})$ and let $\mathcal{E}_{0,w}$ be $C_{b}(G^{0})$ with the weak topology. Denote $\mathcal{E}_{f}$ under weak topology as $\mathcal{E}_{f,w}$. By \cite[Proposition $A.5$]{williams2019tool}, the the topological vector space $\mathcal{F}=\mathcal{E}_{0} \times \prod\limits_{f\in C_{c}(G)} \mathcal{E}_{f}$ with weak topology, denoted $\mathcal{F}_{w}$, is $\mathcal{F}_{w}=\mathcal{E}_{0,w} \times \prod\limits_{f\in C_{c}(G)} \mathcal{E}_{f,w}$.
       Since every closed convex set of a topological vector space is weakly closed, both $\mathcal{F}$ and $\mathcal{F}_{w}$ have the same closed convex sets. Hence using $(1)$ and $(2)$, $(1,(0)_{f\in C_{c}(G)})$ belongs to the closure of the convex set $$C = \{(\mu(g),(L(f)g-\lambda(f)g)_{f\in C_{c}(G)}): g\in C_{c}^{+}(G/H), \|g\|_{\mu,1}\leq 1 \}$$ in $\mathcal{F}$. So there exists a net again denoted as $\{g_{i}\}$, such that $\mu(g_{i}) \to 1$ strictly in $C_{b}(G^{0})$ and such that for all $f\in C_{c}(G)$,
       $$\|L(f)g_{i}-\lambda(f)g_{i}\|_{\mu,1} \to 0. $$ So, $\mu(g_{i}) \to 1$ uniformly on compact subsets of $G^{0}$ and $$ \int_{G/H}\left|\int_{G} f(y)[g_{i}(y^{-1}\cdot\omega)-g_{i}(\omega)]d\lambda^{u}(y)\right|d\mu^{u}(\omega) \to 0$$ uniformly on $G^{0}$. For $f\in C_{c}(G)$, one can easily verify that  
       $$\int_{G/H}\left|\int_{G} f(\xi^{-1}\gamma)[g_{i}(\gamma^{-1}\cdot\omega)-g_{i}(\omega)]d\lambda^{r(\xi)}(\gamma)\right|d\mu^{r(\xi)}(\omega) \to 0$$ uniformly on compact subsets of $G$.

       Now take compact sets $K \subseteq G$ and $L \subseteq G^{0}$ such that $r(K) \subseteq L$ and $K \subseteq G^{L}_{L}$.  Take $f\in C_{c}^{+}(G), \|f\|_{\lambda,1}\leq 1$ such that $\lambda(f)\equiv 1$ on $L$.
       There exist $i_{0}$ such that for all $i\geq i_{0}$
       $$\mu(g_{i})(u)\geq 1-\varepsilon,~~~ \forall u\in d(\operatorname{supp}(f)).$$
       Also,
       $$ \int_{G/H}\left|\int_{G}f(\xi^{-1}\gamma)[g_{i}(\gamma^{-1}\omega)-g_{i}(\omega)]d\lambda^{r(\xi)}(\gamma)\right|d\mu^{r(\xi)}(\omega)< \frac{\varepsilon}{2},~~ \forall \xi \in K.$$

       Fixing some $i \geq i_{0}$, define
       $$ g(\omega)=\int _{G} f(\gamma)g_{i}(\gamma^{-1}\omega)d\lambda^{r_{G^{0}}(\omega)}(\gamma).$$
       Note that $\|g\|_{\mu,1}\leq 1$ and $\mu(g)(u)\geq 1-\varepsilon$ for $u\in L$.
       For $\xi \in K$, we can show that 
       \begin{align*}
           g(\xi^{-1}&\omega)-g(\omega)\\&= \int_{G}f(\xi^{-1}\gamma)[g_{i}(\gamma^{-1}\omega)-g_{i}(\omega)]d\lambda^{r(\xi)}(\gamma)- \int_{G}f(\gamma)[g_{i}(\gamma^{-1}\omega)-g_{i}(\omega)]d\lambda^{r(\xi)}(\gamma)
       \end{align*}
       By the above property,
       $$ \int_{G/H}\left|g(\xi^{-1}\omega)-g(\omega)\right|d\mu^{r(\xi)}(\omega)< \varepsilon,~~ \forall \xi \in K.$$
       Since $G$ and $G^{0}$ are second countable, they are exhaustible by compact sets, thus helping to find a sequence of $g_{i}$'s with the required property.
   \end{proof}  

We now present a sufficient condition on the groupoids $G$ and $H$ that ensures the existence of a topological invariant mean on $G/H$.

\begin{prop}
    Let $G$ be an amenable locally compact groupoid with Haarsystem $\lambda$, $H$ be a closed wide subgroupoid of $G$ wth Haar system $\lambda_{H}$, and $\mu$ be a full equivariant measure system on $G/H$ . Then, $G/H$ possesses a topological invariant mean.
\end{prop}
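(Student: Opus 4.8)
The plan is to reduce the statement to the equivalence just proved and then transport a net of functions from $G$ down to $G/H$. By the preceding theorem it suffices to exhibit a \emph{topological approximate invariant density} on $G/H$, i.e.\ a sequence $\{g_i\}_{i\in\mathbb{N}}\subseteq C_c(G/H)$ with $g_i\geq 0$, $\|g_i\|_{\mu,1}\leq 1$, with $\mu(g_i)(u)=\int_{G/H}g_i\,d\mu^{u}\to 1$ uniformly on compact subsets of $G^{0}$, and with $\int_{G/H}\bigl|g_i(\gamma^{-1}\cdot\omega)-g_i(\omega)\bigr|\,d\mu^{r(\gamma)}(\omega)\to 0$ for every $\gamma\in G$ (indeed uniformly on compact subsets of $G$). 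Since $G$ is amenable, \cite[Proposition~9.42(c)]{williams2019tool} provides the same structure on $G$ itself (the case $H=G^{0}$, with $G/G^{0}\cong G$ and $\mu$ equal to the Haar system $\lambda$): a sequence $\{\phi_i\}\subseteq C_c(G)$ with $\phi_i\geq 0$, $\lambda(\phi_i)\leq 1$, $\lambda(\phi_i)(u)\to 1$ uniformly on compacta of $G^{0}$, and $\int_{G}\bigl|\phi_i(\gamma^{-1}y)-\phi_i(y)\bigr|\,d\lambda^{r(\gamma)}(y)\to 0$ uniformly for $\gamma$ in compacta of $G$. (Alternatively, starting from the functions $f_i$ in the definition of topological amenability in Section~2, one takes $\phi_i=|f_i|^{2}$, for which $\lambda(\phi_i)=f_i\ast f_i^{*}\to 1$ and, by a Cauchy--Schwarz estimate, $\int_G\bigl|\phi_i(\gamma^{-1}y)-\phi_i(y)\bigr|\,d\lambda^{r(\gamma)}(y)\to0$ uniformly on compacta, and then extracts a sequence using second countability as at the end of the previous proof.)

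The candidate density on $G/H$ is the fibrewise average of $\phi_i$ over the cosets of $H$,
$$g_i(xH)=\int_{H}\phi_i(xh)\,d\lambda_{H}^{d(x)}(h),\qquad xH\in G/H,$$
$\lambda_H$ being the Haar system of $H$. Left invariance of $\lambda_H$ makes the right-hand side independent of the representative $x$, so $g_i$ is a well-defined nonnegative element of $C_c(G/H)$ (continuity and compactness of support --- the latter inside $q_H(\mathrm{supp}\,\phi_i)$ --- follow from the standard argument for integrals against a continuous Haar system). The three properties are now read off from the decomposition (Weil-type) formula
$$\int_{G}\psi(y)\,d\lambda^{u}(y)=\int_{G/H}\int_{H}\psi(xh)\,d\lambda_{H}^{d(x)}(h)\,d\mu^{u}(xH),\qquad \psi\in C_c(G),$$
which expresses the compatibility of $\lambda$, $\lambda_H$ with a full equivariant measure system $\mu$ (cf.\ \cite{sridharan2025induced}). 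Taking $\psi=\phi_i$ gives $\int_{G/H}g_i\,d\mu^{u}=\lambda(\phi_i)(u)$, so $\|g_i\|_{\mu,1}\leq 1$ and $\mu(g_i)(u)\to 1$ uniformly on compacta; and since, for $\gamma$ with $r(\gamma)=r(x)$,
$$g_i(\gamma^{-1}\cdot xH)-g_i(xH)=\int_{H}\bigl[\phi_i(\gamma^{-1}xh)-\phi_i(xh)\bigr]\,d\lambda_{H}^{d(x)}(h),$$
the triangle inequality followed by the decomposition formula localised at $u=r(\gamma)$ yields
$$\int_{G/H}\bigl|g_i(\gamma^{-1}\cdot\omega)-g_i(\omega)\bigr|\,d\mu^{r(\gamma)}(\omega)\ \leq\ \int_{G}\bigl|\phi_i(\gamma^{-1}y)-\phi_i(y)\bigr|\,d\lambda^{r(\gamma)}(y)\ \longrightarrow\ 0$$
uniformly for $\gamma$ in compacta of $G$. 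Hence $\{g_i\}$ is a topological approximate invariant density on $G/H$, and the preceding theorem produces a topological invariant mean on $G/H$ with respect to $\mu$.

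The delicate point I anticipate is the bookkeeping of the three systems $\lambda,\lambda_H,\mu$ and their normalizations: one must have the decomposition formula in precisely the normalization built into the notion of a \emph{full} equivariant measure system, and the upstairs density must be normalized so that $\lambda(\phi_i)\leq1$, which forces $\|g_i\|_{\mu,1}\leq1$ downstairs. Should only a uniform bound $\sup_i\|g_i\|_{\mu,1}<\infty$ together with $\mu(g_i)\to1$ on compacta be at hand, one normalizes the limiting functional exactly as in the classical group case \cite[Theorem~G.3.1]{bekka2007kazhdan}. A more conceptual variant avoids the explicit averaging: amenability of $G$ says $r\colon G\to G^{0}$ is amenable, and pushing an approximate invariant continuous mean for $r$ forward along the open continuous surjection $q_H\colon G\to G/H$ gives an approximate invariant continuous mean for $r_{G^{0}}\colon G/H\to G^{0}$, so $G$ acts amenably on $G/H$; \cite[Proposition~9.42]{williams2019tool} then supplies the density on $G/H$ directly.
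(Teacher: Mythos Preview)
Your proposal is correct and follows essentially the same route as the paper: push a topological approximate invariant density $\{\phi_i\}$ on $G$ down to $G/H$ via $g_i=P\phi_i(xH)=\int_H\phi_i(xh)\,d\lambda_H^{d(x)}(h)$, and verify the required estimates using the decomposition formula, then invoke Theorem~4.2. The ``delicate point'' you anticipate is exactly how the paper resolves the bookkeeping: it \emph{defines} the Haar system $\lambda$ on $G$ by the decomposition formula $\int_G g\,d\lambda^u=\int_{G/H}Pg\,d\mu^u$ and appeals to \cite[Theorem~2.2.17]{MR1799683} (amenability is independent of the choice of Haar system) to get the density $\{\phi_i\}$ relative to that particular $\lambda$, which makes all your inequalities exact rather than requiring any renormalization.
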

\begin{proof}

 Note that amenability is independent of the choice of Haar system of the groupoid due to \cite[Theorem $2.2.17$]{MR1799683}. So, the groupoid $G$ with another Haar system  $\lambda'$  defined as 
    $$ \int_{G} g d\lambda'^{u}= \int_{G/H} Pg d\mu^{u}, ~~ g\in C_{c}(G),$$
    where $Pg(yH)= \int_{H} g(yh)d\lambda_{H}^{d(y)}(h)$, is also amenable. Hence, by \cite[Proposition $2.2.13(ii)$]{MR1799683}, there exists a topological approximate invariant density $\{g_{i}\}$ on $G$ with  Haar system $\lambda'$.
Also, one can see that
\begin{align*}
    \int_{G/H}|Pg_{i}(x^{-1}yH)-Pg_{i}(yH)|d\mu^{r(x)}(yH)&= \int_{G/H}|P(L_{x}g_{i}-g_{i})(yH)|d\mu^{r(x)}(yH)\\
   &\leq\int_{G/H}P|(L_{x}g_{i}-g_{i})|(yH)d\mu^{r(x)}(yH)\\ &=\int_{G}|g_{i}(x^{-1}y)-g_{i}(y)|d\lambda'^{r(x)}(y).
\end{align*}Now we can conclude  that $\{Pg_{i}\}$ forms a topological approximate invariant density on $G/H$. Hence, by Theorem $4.2$, the result follows.\qedhere

\end{proof}

\begin{rem}
    Observe that the aforementioned property of $G/H$ remains valid for any full equivariant measure system $\mu$, as the amenability of the groupoid $G$ does not depend on the choice of Haar system.
\end{rem}

\section{Amenability and Induced Representation}
In this section, we prove some properties of amenable groupoids in terms of weak containment of representations.  We prove a groupoid version of Greenleaf's theorem.
\begin{defi}
    A locally compact groupoid $G$ satisfies weak Frobenius property if for every closed wide subgroupoid $H$ with $G/H$ having a full equivariant measure system $\mu$ and unitary representation $\pi$ of $G$, $\pi \prec (\operatorname{ind}_{H}^{G}(\pi_{|_{H}}),\mu)$. 
\end{defi}
The following theorem provides a relation between amenable groupoid and $weak~ Frobenious ~property$. This is a groupoid version of the well known Greenleaf's theorem.
\begin{thm}
    Let $G$ be a locally compact groupoid, then $G$ is amenable iff $G$ satisfies weak Frobenius property.
          
\end{thm}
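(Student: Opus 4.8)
The plan is to prove the two implications separately, leveraging the equivalence between topological amenability and the existence of approximate invariant densities, together with the structural results of Section~3 and Section~4.

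\textbf{($\Rightarrow$) Amenability implies weak Frobenius.} Suppose $G$ is amenable. Let $H$ be a closed wide subgroupoid with full equivariant measure system $\mu$ on $G/H$, and let $\pi$ be a unitary representation of $G$. First I would recall from Theorem~3.1 (with $\rho = \pi$) that
$$\pi \otimes (ind_{H}^{G}(1_{H}),\mu) = (ind_{H}^{G}(\pi_{|_{H}} \otimes 1_{H}),\mu) = (ind_{H}^{G}(\pi_{|_{H}}),\mu),$$
so it suffices to show $\pi \prec \pi \otimes (ind_{H}^{G}(1_{H}),\mu)$, and by Lemma~3.4 it suffices to show the trivial representation $1_{G}$ is weakly contained in $(ind_{H}^{G}(1_{H}),\mu)$ — because then $\pi = \pi \otimes 1_{G} \prec \pi \otimes (ind_{H}^{G}(1_{H}),\mu)$ by Lemma~3.3. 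To produce the weak containment $1_{G} \prec (ind_{H}^{G}(1_{H}),\mu)$, I would use Proposition~4.3: since $G$ is amenable, $G/H$ carries a topological invariant mean, equivalently (Theorem~4.2) a topological approximate invariant density $\{g_i\} \subseteq C_c(G/H)$ with $\mu(g_i) \to 1$ uniformly on compact subsets of $G^{0}$ and the translation-invariance estimate. The functions $\eta_i = \sqrt{g_i} \in C_c(G/H)$, viewed as sections of the Hilbert bundle underlying $(ind_{H}^{G}(1_{H}),\mu)$ (whose fiber at $u$ is $L^2(r_{G^0}^{-1}(u),\mu^u)$), then satisfy that $\langle (ind_{H}^{G}(1_{H}))(x)\eta_i(d(x)),\eta_i(r(x))\rangle \to 1 = \phi(x)$ uniformly on compact subsets of $G$; the Cauchy–Schwarz / square-root trick converts the $L^1$-invariance of $g_i$ into the $L^2$-invariance of $\eta_i$, exactly as in the classical Hulanicki–Reiter argument. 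This gives $1_G \prec (ind_{H}^{G}(1_{H}),\mu)$ and closes the direction.

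\textbf{($\Leftarrow$) Weak Frobenius implies amenability.} I would specialize the weak Frobenius property to $H = G^{0}$, the unit space regarded as a closed wide subgroupoid, with $\mu$ the given left Haar system $\lambda$ on $G/G^{0} \cong G$ (here $r_{G^0}$ is just the range map $r$). Then $(ind_{G^0}^{G}(1_{G^0}),\lambda)$ is the left regular representation $\lambda_G$ of $G$ on $L^2(G,\lambda) = \{L^2(G^u,\lambda^u)\}_{u\in G^0}$. Applying weak Frobenius with $\pi = 1_G$ gives $1_G \prec \lambda_G$. By the characterization recalled in Section~2 (\cite[Proposition~2.2.13]{MR1799683}), weak containment of the trivial representation in the regular representation is equivalent to (topological) amenability of $G$; concretely, $1_G \prec \lambda_G$ yields sections $\eta_i \in C_b(G,L^2(G,\cdot))$ with $\sum_i \langle \lambda_G(x)\eta_i(d(x)),\eta_i(r(x))\rangle \to 1$ uniformly on compact sets, and after a standard truncation/approximation one extracts $f_i \in C_c(G)$ with $u \mapsto \lambda(|f_i|^2)(u)$ uniformly bounded and $f_i * f_i^* \to 1$ uniformly on compacts, which is precisely the definition of topological amenability, hence amenability by \cite[Proposition~2.2.13]{MR1799683}.

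\textbf{Main obstacle.} The routine parts are the algebraic identities and the Cauchy–Schwarz estimates; the delicate point is verifying carefully that the square-root density sections $\sqrt{g_i}$ genuinely lie in $C_0(G^0,\mathcal{H}^{ind})$ (or at least in $C_b$ of the relevant bundle) and that the pointwise convergence of the matrix coefficients is in fact \emph{uniform on compact subsets of $G$}, not merely pointwise — this requires combining the uniform-on-compacts convergence $\mu(g_i)\to 1$ with the uniform-on-compacts translation estimate from Theorem~4.2's proof, and controlling the cross term via Cauchy–Schwarz uniformly over a compact $K \subseteq G$. A secondary technical nuisance in the ($\Leftarrow$) direction is passing from the abstract weak-containment inequality with finitely many bounded sections $\eta_i$ to a single compactly supported $f_i$ meeting the two bulleted conditions defining topological amenability; this is the groupoid analogue of the classical reduction and I would either cite \cite[Proposition~2.2.13]{MR1799683} directly or reproduce the short truncation argument, since $G$ is second countable and $\sigma$-compact so the nets may be replaced by sequences.
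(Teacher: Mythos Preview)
Your proposal is correct and follows essentially the same route as the paper: reduce the forward direction to $1_G \prec (ind_H^G(1_H),\mu)$ via Theorem~3.1 and Lemma~3.3, then use Proposition~4.3/Theorem~4.2 to produce an approximate invariant density and take its square root as a section, estimating via Cauchy--Schwarz; for the converse, specialize to $H=G^0$ so that the induced representation is the left regular one and invoke \cite[Proposition~2.2.13]{MR1799683}. The only minor slip is the stray reference to Lemma~3.4 in your reduction (it is Lemma~3.3 that is needed, as you in fact say immediately afterwards), and the paper works with a single $\varphi$ chosen for a given compact $K$ and $\epsilon$ rather than a net $\{g_i\}$, but this is purely cosmetic.
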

\begin{proof}
    Suppose $G$ is amenable. It is enough to show that $1_{G} \prec \operatorname{ind}_{H}^{G}(1_{H})$. This is because, using Theorem $3.1$ and Lemma $3.3$, we get
    $$\pi = \pi \otimes 1_{G} \prec \pi \otimes \operatorname{ind}_{H}^{G}(1_{H}) = \operatorname{ind}_{H}^{G}(\pi_{|_{H}}\otimes 1_{H})= \operatorname{ind}_{H}^{G}(\pi_{|_{H}}) .$$ 
    
    Since $G$ is amenable, using Proposition $4.3$, $G/H$  possesses a topological invariant mean. Let $K$ be a compact set in $G$ and $\varepsilon >0$. Then by Theorem $4.2$, there exist a $\varphi \in C_{c}^{+}(G/H)$ such that $\|\varphi\|_{\mu,1}\leq 1, \int_{G/H}\varphi d\mu^{u}>1-\frac{\varepsilon}{2}$ for all $u \in r(K)$ and $$ \int_{G/H}\left|\varphi(x^{-1}yH)-\varphi(yH)\right|d\mu^{u}(yH)< \frac{\varepsilon^{2}}{4}  $$ for 
    $(x,u) \in (K \times r(K)) \cap (G* G^{0})$.

    Define, $\xi(g)= \varphi(gH)^{1/2}, \forall g \in G$. It can be easily seen that $\xi \in \mathcal{F}^{1_{H}}(G)$. Then,
    \begin{align*}
        \langle \operatorname{ind}_{H}^{G}(1_{H})(g)\xi,\xi \rangle &= \int_{G/H}\langle \operatorname{ind}_{H}^{G}(1_{H})(g)\xi(y),\xi(y) \rangle d\mu^{r(g)}(yH)\\
        &= \int_{G/H}\varphi(g^{-1}y)^{1/2}\varphi(y)^{1/2}d\mu^{r(g)}(yH).
    \end{align*}
    For $g\in K$,
\begin{align*}
       \left |\langle \operatorname{ind}_{H}^{G}(1_{H})(g)\xi,\xi \rangle-1\right| &\leq \left|\int_{G/H}(\varphi(g^{-1}y)^{1/2}-\varphi(y)^{1/2})\varphi(y)^{1/2} d\mu^{r(g)}(yH)\right|+ \frac{\varepsilon}{2}\\
        &\leq \left(\int_{G/H}\left|(\varphi(g^{-1}y)^{1/2}-\varphi(y)^{1/2})\right|^{2} d\mu^{r(g)}(yH)\right)^{1/2}+ \frac{\varepsilon}{2}\\
        &\leq \left(\int_{G/H}\left|(\varphi(g^{-1}y)-\varphi(y)\right| d\mu^{r(g)}(yH)\right)^{1/2}+ \frac{\varepsilon}{2}\\
        & < \varepsilon
    \end{align*}
    For converse, let $H =G^{0}$ , then $\operatorname{ind}_{H}^{G}(1_{H})= \lambda_{G}$ where $\lambda_{G}$ denotes the left regular representation. Then the result follows from \cite[Proposition $2.2.13~(iv)$]{MR1799683}.\qedhere
    
\end{proof}

 The following result gives a sufficient condition on a pair of representations—one of a groupoid $G$ and one of a subgroupoid $H$- under which the representation of $G$ is weakly contained in the representation induced from the representation of $H$.
 \begin{corl}
     Let $H$ be a closed wide subgroupoid of an amenable groupoid $G$  with $\mu$ being a full equivariant measure system on $G/H$, $\pi$ and $\tau$ be representations of $G$ and $H$ respectively such that $\tau \prec \pi_{|_{H}}$ . Suppose,
     \begin{enumerate}[(i)]
         \item $1_{H} \prec \bar{\tau} \otimes\tau$
         \item if $\rho$ is a representations of $G$, then $1_{G}\prec \bar{\pi}\otimes \rho$ implies $\pi \prec \rho$.
     \end{enumerate}
     Then $\pi \prec (\operatorname{ind}_{H}^{G}(\tau),\mu)$.
 \end{corl}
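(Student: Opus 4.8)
The plan is to invoke hypothesis (ii) with $\rho=(ind_{H}^{G}(\tau),\mu)$, which reduces the problem to proving
$$1_{G}\prec \bar{\pi}\otimes (ind_{H}^{G}(\tau),\mu).$$
Applying Theorem~3.1 with the $G$-representation $\bar{\pi}$ and the $H$-representation $\tau$, and using that conjugation commutes with restriction (so $(\bar{\pi})_{|_{H}}=\overline{\pi_{|_{H}}}$), this target may be rewritten as
$$1_{G}\prec (ind_{H}^{G}(\overline{\pi_{|_{H}}}\otimes \tau),\mu).$$
So the whole argument comes down to producing this last weak containment.

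First I would establish the corresponding statement one level down, between representations of $H$. Since weak containment passes to conjugates --- the positive definite functions of $\bar{\sigma}$ are exactly the complex conjugates of those of $\sigma$, and the approximating finite sums transform accordingly --- the hypothesis $\tau\prec \pi_{|_{H}}$ yields $\bar{\tau}\prec \overline{\pi_{|_{H}}}$. Combining this with the trivial $\tau\prec\tau$ and Lemma~3.3 gives $\bar{\tau}\otimes\tau\prec \overline{\pi_{|_{H}}}\otimes\tau$ as representations of $H$. Hypothesis (i), $1_{H}\prec\bar{\tau}\otimes\tau$, together with transitivity of $\prec$, then gives
$$1_{H}\prec \overline{\pi_{|_{H}}}\otimes \tau.$$
Next I would push this up to $G$: Lemma~3.4 applied to this weak containment of $H$-representations gives $(ind_{H}^{G}(1_{H}),\mu)\prec(ind_{H}^{G}(\overline{\pi_{|_{H}}}\otimes\tau),\mu)$. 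Since $G$ is amenable it satisfies the weak Frobenius property (Theorem~5.2); applying that property to the representation $1_{G}$ of $G$, whose restriction to $H$ is $1_{H}$, yields $1_{G}\prec(ind_{H}^{G}(1_{H}),\mu)$. One more application of transitivity gives $1_{G}\prec(ind_{H}^{G}(\overline{\pi_{|_{H}}}\otimes\tau),\mu)$, which is the target, and hypothesis (ii) concludes $\pi\prec(ind_{H}^{G}(\tau),\mu)$.

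The argument is essentially a diagram chase through Theorem~3.1, Theorem~5.2, and Lemmas~3.3 and~3.4, so I do not expect a serious analytic obstacle. The care points are the bookkeeping in the two uses of transitivity of weak containment, the elementary facts that conjugation preserves weak containment and commutes with restriction, and the check that $\bar{\pi}$ really is a continuous unitary representation of $G$ so that Theorem~3.1 applies to it. It is also worth remarking that hypothesis (i) is itself the conclusion of Theorem~5.3 applied to $H$ and $\tau$ whenever $H$ is amenable and $C_{0}(H^{0},\mathcal{H}^{\tau})$ is countably generated; this is the sense in which the statement is a corollary of the preceding results.
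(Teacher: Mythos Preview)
Your proof is correct and follows essentially the same route as the paper: derive $1_{H}\prec \overline{\pi_{|_{H}}}\otimes\tau$ from (i), $\tau\prec\pi_{|_{H}}$, and Lemma~3.3; induce via Lemma~3.4; use amenability (Theorem~5.2) for $1_{G}\prec (ind_{H}^{G}(1_{H}),\mu)$; rewrite via Theorem~3.1; and conclude by (ii). Your closing remark about (i) following from Theorem~5.3 under suitable hypotheses also matches the paper's own Remark~5.5.
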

 \begin{proof}
     Let $\pi$ and $\tau$ be representations of $G$ and $H$ respectively such that $\tau \prec \pi_{|_{H}}$. Then by $(i)$, $1_{H} \prec \bar{\tau} \otimes\tau$ implies $1_{H}  \prec \bar{\pi}_{|_{H}}\otimes\tau  $. Since $G$ is amenable, by Theorem $5.2$, Theorem $3.1$ and Lemma $3.4$,
     $$ 1_{G} \prec (\operatorname{ind}_{H}^{G}(1_{H}),\mu) \prec (\operatorname{ind}_{H}^{G}(\bar{\pi}_{|_{H}} \otimes\tau),\mu)= \bar{\pi}\otimes (\operatorname{ind}_{H}^{G}(\tau),\mu) $$
     Hence by $(ii)$, $\pi \prec (\operatorname{ind}_{H}^{G}(\tau),\mu)$.
 \end{proof}

 \section{Restriction of Induced representation }
So far, we have discussed about weak containment of representations of groupoid. In this section, we examine how certain closed subgroupoid representations are contained within the restriction of induced representations of groupoids. First we focus on groupoids $G$ with a discrete unit space $G^{0}$, and consider relatively clopen wide subgroupoids $H$ . For further details, see \cite[Definition 2.2]{oyono2023groupoids}. Later, we show the same result using compact transitive groupoids and their closed wide transitive subgroupoids.

Additionally, using the above result, we show the extension of an element of the Fourier-Stieltjes algebra $B(H)$  to an element of $B(G)$.  Since every element of $B(G)$ is a linear combination of elements of $P(G)$, we only need to show the extension of positive definite functions. Here, we would like to highlight a recent study \cite[Section $4$]{MR4880521}, which investigates the surjectivity of the restriction of the Fourier–Stieltjes algebra to isotropy subgroups in certain groupoids.

 \begin{thm}
     Let $H$ be a relatively clopen wide subgroupoid of a locally compact groupoid $G$ with discrete unit space $G^{0}$. If $\pi$ is a continuous representation of $H$ then $\pi$ is a subrepresentation of $(\operatorname{ind}_{H}^{G}(\pi),\mu)_{|_{H}}$, where $\mu=\{\mu^{u}\}_{u\in G^{0}}$ is the  full equivariant measure system  of counting measures.  
  \end{thm}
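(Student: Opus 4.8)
The approach is to exhibit, directly, an $H$-invariant continuous Hilbert sub-bundle of $\mathcal{H}^{ind_{H}^{G}(\pi)}$ on which the restriction of $ind_{H}^{G}(\pi)$ to $H$ is unitarily equivalent to $\pi$; this is the groupoid counterpart of the classical embedding of a representation of an open subgroup into the restriction of its induced representation. Recall from \cite{sridharan2025induced} that $ind_{H}^{G}(\pi)$ acts on the completion of $\mathcal{F}^{\pi}(G)$, an element $f$ of which restricts over $u\in G^{0}$ to a continuous section on $G^{u}$ (with $f(x)\in\mathcal{H}^{\pi}_{d(x)}$) satisfying $f(xh)=\pi(h^{-1})f(x)$ for $h\in H$, with fibre norm $\|f\|_{u}^{2}=\int_{G/H}\|f(x)\|^{2}\,d\mu^{u}(xH)$, the sections $\mathcal{E}(g,w)$ ($g\in C_{c}(G),\ w\in C_{0}(G^{0},\mathcal{H}^{\pi})$) being total, and with $(ind_{H}^{G}(\pi)(x)f)(y)=f(x^{-1}y)$. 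Since $H$ is relatively clopen, $H^{u}:=H\cap G^{u}$ is clopen in $G^{u}$ and $C_{c}(G)$ contains functions supported in $H$; and since $G^{0}$ is discrete the counting measures $\{\mu^{u}\}$ form a full equivariant measure system (condition (ii) being automatic), each atom $\{uH\}$ having mass $1$. For $u\in G^{0}$ and $v\in\mathcal{H}^{\pi}_{u}$ I would set $f_{v}(x)=\pi(x^{-1})v$ for $x\in H^{u}$ and $f_{v}(x)=0$ for $x\in G^{u}\setminus H^{u}$.

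First I would check that $f_{v}$ is a genuine element of the fibre $\mathcal{H}^{ind_{H}^{G}(\pi)}_{u}$: the relation $f_{v}(xh)=\pi(h^{-1})f_{v}(x)$ is immediate because $\pi$ is a homomorphism and $xh\in H^{u}\Leftrightarrow x\in H^{u}$; continuity of $f_{v}$ on $G^{u}$ follows from axiom (v) for $\pi$ on the clopen set $H^{u}$ together with vanishing on its clopen complement; and since $f_{v}$ is supported on the single coset $uH$ and $\mu^{u}$ is the counting measure, $\|f_{v}\|_{u}=\|v\|$. Thus $v\mapsto f_{v}$ is an isometry $\mathcal{H}^{\pi}_{u}\to\mathcal{H}^{ind_{H}^{G}(\pi)}_{u}$; let $\mathcal{K}_{u}$ be its (closed) range. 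Next, for $h\in H$ and $v\in\mathcal{H}^{\pi}_{d(h)}$, a short index computation gives $(ind_{H}^{G}(\pi)(h)f_{v})(y)=f_{v}(h^{-1}y)$, which vanishes for $y\in G^{r(h)}\setminus H^{r(h)}$ and equals $\pi(y^{-1})\pi(h)v$ for $y\in H^{r(h)}$; hence $ind_{H}^{G}(\pi)(h)f_{v}=f_{\pi(h)v}$. Consequently $ind_{H}^{G}(\pi)(h)\,\mathcal{K}_{d(h)}=\mathcal{K}_{r(h)}$ and the isometries $v\mapsto f_{v}$ intertwine $ind_{H}^{G}(\pi)|_{H}$ on $\mathcal{K}:=\{\mathcal{K}_{u}\}_{u\in G^{0}}$ with $\pi$.

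The step I expect to need the most care is showing that $\mathcal{K}$ is a \emph{continuous} Hilbert sub-bundle of $\mathcal{H}^{ind_{H}^{G}(\pi)}$, not merely a measurable field of subspaces, so that it really is a subrepresentation in the continuous sense. For this I would take $g\in C_{c}(G)$ with $\mathrm{supp}(g)\subseteq H$ and $w\in C_{0}(G^{0},\mathcal{H}^{\pi})$: then $\mathcal{E}(g,w)$ is supported on $H$, and using the left-invariance of $\lambda_{H}$ one computes that its restriction to $G^{u}$ is exactly $f_{v(u)}$, where $v(u)=\int_{H}g(k)\pi(k)w(d(k))\,d\lambda_{H}^{u}(k)\in\mathcal{H}^{\pi}_{u}$; letting $g$ and $w$ vary, the vectors $v(u)$ run over a dense subset of $\mathcal{H}^{\pi}_{u}$ for every $u$. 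Since each $\mathcal{E}(g,w)$ is by construction a continuous section of $\mathcal{H}^{ind_{H}^{G}(\pi)}$ lying in $\mathcal{K}$ and these are fibrewise total in $\mathcal{K}$, the family $\mathcal{K}$ is a continuous Hilbert sub-bundle, and the restriction of $ind_{H}^{G}(\pi)|_{H}$ to it is a continuous representation of $H$ unitarily equivalent to $\pi$; this proves that $\pi$ is a subrepresentation of $(ind_{H}^{G}(\pi),\mu)_{|_{H}}$, as claimed. Finally, as indicated after the statement, every diagonal matrix coefficient $h\mapsto\langle\pi(h)\xi(d(h)),\xi(r(h))\rangle$ of $\pi$ is then the restriction to $H$ of a matrix coefficient of $ind_{H}^{G}(\pi)$, hence the restriction of an element of $B(G)$; by linearity, every element of $P(H)$, and so of $B(H)$, extends to $G$.
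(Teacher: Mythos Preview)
Your argument is correct and rests on the same underlying idea as the paper's: embed $\mathcal{H}^{\pi}$ into the induced bundle as the ``trivial-coset component'', i.e.\ via $v\mapsto f_{v}$ with $f_{v}(x)=\pi(x^{-1})v$ on $H^{u}$ and $0$ elsewhere. The difference is purely one of packaging. The paper first builds a global $\ell^{2}$-model: for a section $\gamma:G/H\to G$ (automatically continuous since $G/H$ is discrete) it constructs a unitary $W:\ell_{\gamma}^{2}(G/H,\mathcal{H}^{\pi})\to\mathcal{F}^{\pi}(G,\mu)$, transports $ind_{H}^{G}(\pi)$ to $U_{\gamma}^{\pi}(x)\eta(\omega)=\pi[\gamma(\omega)^{-1}x\gamma(x^{-1}\omega)]\eta(x^{-1}\omega)$, fixes $\gamma$ so that $\gamma(uH)=u$, and then identifies $\mathcal{H}^{\pi}$ with the sub-bundle of sequences supported on the trivial cosets. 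Composing that embedding with $W$ gives exactly your map $v\mapsto f_{v}$, so the two proofs coincide after unwinding. Your direct route is shorter for this particular statement; the paper's detour through $\ell_{\gamma}^{2}$ has the side benefit of producing an explicit coordinate description of the whole induced representation (the groupoid analogue of the classical $\ell^{2}(G/H,\mathcal{H}^{\pi})$ picture for open subgroups), which is what makes the invariance and sub-bundle verification immediate there. One small remark: your careful check that $\mathcal{K}$ is a \emph{continuous} sub-bundle via the sections $\mathcal{E}(g,w)$ with $\mathrm{supp}\,g\subseteq H$ is valid, but since $G^{0}$ is discrete every section of a bundle over $G^{0}$ is automatically continuous, so this step could be shortened.
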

  \begin{proof}
      We can easily see that $G/H$ is a discrete space, hence any map $\gamma: G/H \to G$ such that $q_{H}\circ \gamma=I_{G/H}$ is continuous. 
     For fixed $\gamma$, define the Hilbert $C_{0}(G^{0})$-module
      $$ \ell_{\gamma}^{2}(G/H,\mathcal{H}^{\pi})=\left\{\eta:G/H \to \mathcal{H}^{\pi}: \eta(\omega)\in \mathcal{H}^{\pi}_{d(\gamma(\omega))}, f(u)=\sum_{\omega\in r_{G^{0}}^{-1}(u)}\|\eta(\omega)\|^{2} \in C_{0}(G^{0})\right\}.$$
      Here, the action of $C_{0}(G^{0})$ and innerproduct is defined as follows: $f \cdot \eta (\omega)= f(r_{G^{0}}(\omega))\eta(\omega)$, $\langle\eta ,\gamma \rangle(u)= \sum_{\omega \in r_{G^{0}}^{-1}(u)} \langle \eta(\omega),\gamma(\omega)\rangle$. 
       
      Let $W:\ell_{\gamma}^{2}(G/H,~\mathcal{H}^{\pi})\to \mathcal{F}^{\pi}(G,\mu), W(\eta)(\gamma(\omega) h)=\pi(h^{-1})\eta(\omega) $, then it is a $C_{0}(G^{0})$-valued innerproduct preserving module map. The inverse of $W$ is
  $$ [W^{-1}\xi](\omega)=\xi(\gamma(\omega)), ~\xi \in \mathcal{F}_{0}^{\pi}(G,\mu).$$
  Thus, $\ell_{\gamma}^{2}(G/H,~\mathcal{H}^{\pi})$ and  $\mathcal{F}^{\pi}(G,\mu)$ are unitarily equivalent.
  Now, define a continuous representation $U_{\gamma}^{\pi}$ of $G$ on the continuous field of Hilbert space $\ell^{\gamma}$ corresponding to $\ell_{\gamma}^{2}(G/H,\mathcal{H}^{\pi})$
 as,
$$U_{\gamma}^{\pi}(x)= W_{r(x)}^{-1}(\operatorname{ind}_{H}^{G}(\pi),\mu)(x)W_{d(x)}. $$
 $U_{\gamma}^{\pi}(x)\in B(\ell^{\gamma}_{d(x)},\ell^{\gamma}_{r(x)})$  are unitary operators. Using the same arguments as \cite[Proposition $2.3$]{kaniuth2013induced}, we can see that $[U_{\gamma}^{\pi}(x)\eta](\omega)=\pi[\gamma(\omega)^{-1}x\gamma(x^{-1}\cdot \omega)]\eta(x^{-1}\cdot \omega), \omega \in r_{G^{0}}^{-1}(r(x))$.

Next, we show that the Hilbert bundle $\mathcal{H}^{\pi}$ is a subbundle of $\ell^{\gamma}$, for a chosen $\gamma$.
Fix $\gamma:G/H \to G$ such that, in particular, if $ q_{H}^{-1}(\omega) \subseteq H $, then
  $\gamma(\omega)=r_{G^{0}}(\omega)$ .
For $t\in C_{c}(G^{0},\mathcal{H}^{\pi})$, define
\[\eta_{t}(\omega)=\begin{cases} 
      t(r_{G^{0}}(\omega)), & q_{H}^{-1}(\omega) \subseteq H \\
      
      0~~ ,  & otherwise  
   \end{cases}
\]

With the above, we can identify $\mathcal{H}^{\pi}$ as a subbundle of $\ell^{\gamma}$ with the defined $\gamma$. Hence, due to unitary equivalence, one can easily verify that $\pi$ is a subrepresentation of $(\operatorname{ind}_{H}^{G}(\pi),\mu)_{|_{H}}$.
\end{proof}
With the above result, we show that the restriction map $u \to u_{|_{H}}$ from Fourier- Stieltjes algebra $B(G)$ of groupoid $G$ with discrete unit space $G^{0}$ to $B(H)$ of a relatively clopen wide subgroupoid  $H$ is surjective. 
\begin{corl}
    Let $G$ be a locally compact groupoid with discrete unit space $G^{0}$, and let $H$ be a relatively clopen wide subgroupoid $H$. Then given $\varphi \in P(H)$, there exist  $\phi \in P(G) $ such that $\phi_{|_{H}}=\varphi$ and $\|\phi\|=\|\varphi\|$. Moreover, the restriction map from $B(G)$ to $B(H)$ is surjective.
\end{corl}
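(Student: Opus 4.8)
The plan is to reduce the statement about the Fourier--Stieltjes algebras to the representation-theoretic fact established in Theorem~6.1. Recall that an element $\varphi \in P(H)$ is a positive definite function associated to some continuous unitary representation $\pi$ of $H$, so that $\varphi(h) = \langle \pi(h)\xi(d(h)), \xi(r(h))\rangle$ for a suitable section $\xi \in C_{0}(G^{0},\mathcal{H}^{\pi})$; and $\|\varphi\|$ is computed as $\sup_{u\in G^{0}}\|\xi(u)\|^{2}$ for the optimal choice of $\xi$ (this uses the description of $B(H)$ and $P(H)$ from \cite{paterson2004fourier}). First I would invoke Theorem~6.1 to realize $\pi$ as a subrepresentation of $(ind_{H}^{G}(\pi),\mu)_{|_{H}}$, via the isometric $G^{0}$-bundle embedding $t \mapsto \eta_{t}$ constructed there. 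This gives an isometry $V : \mathcal{H}^{\pi} \hookrightarrow \ell^{\gamma}$ of continuous Hilbert bundles that intertwines $\pi(h)$ with $U_{\gamma}^{\pi}(h) = W_{r(h)}^{-1}(ind_{H}^{G}(\pi),\mu)(h)W_{d(h)}$ for all $h \in H$.

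Next I would push $\xi$ forward along this embedding: set $\tilde\xi = W \circ V \circ \xi \in C_{0}(G^{0}, \mathcal{F}^{\pi}(G,\mu))$, a section of the induced representation's bundle with $\|\tilde\xi(u)\| = \|\xi(u)\|$ for every $u$, since both $W$ and $V$ are isometric on fibers. Now define $\phi$ on all of $G$ by
$$\phi(x) = \langle (ind_{H}^{G}(\pi),\mu)(x)\,\tilde\xi(d(x)), \tilde\xi(r(x))\rangle.$$
By construction $\phi \in P(G)$, it is the positive definite function associated to $(ind_{H}^{G}(\pi),\mu)$ and the section $\tilde\xi$, and $\|\phi\| \le \sup_{u}\|\tilde\xi(u)\|^{2} = \sup_{u}\|\xi(u)\|^{2} = \|\varphi\|$. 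For $h \in H$, using that $V$ intertwines $\pi$ with $U_{\gamma}^{\pi}$ and that $W$ is the unitary implementing $U_{\gamma}^{\pi}(h) = W_{r(h)}^{-1}(ind_{H}^{G}(\pi),\mu)(h)W_{d(h)}$, the matrix coefficient collapses:
$$\phi(h) = \langle U_{\gamma}^{\pi}(h)(V\xi)(d(h)), (V\xi)(r(h))\rangle = \langle \pi(h)\xi(d(h)), \xi(r(h))\rangle = \varphi(h),$$
so $\phi_{|_{H}} = \varphi$. The reverse norm inequality $\|\varphi\| = \|\phi_{|_{H}}\| \le \|\phi\|$ follows since restriction is norm-decreasing on Fourier--Stieltjes algebras (a positive definite function on $G$ restricts to one on $H$ of no larger norm), giving $\|\phi\| = \|\varphi\|$.

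Finally, the statement about the restriction map $B(G) \to B(H)$ being surjective is immediate: every element of $B(H)$ is a finite linear combination of elements of $P(H)$, each of which extends by the above, and linearity of restriction finishes the argument. The main obstacle I anticipate is bookkeeping rather than conceptual: one must check that $\tilde\xi$ is genuinely a continuous section vanishing at infinity of the induced bundle (this is where fiberwise density of $C_{0}(G^{0},\mathcal{H})$ in $\Gamma$ and continuity of $W$, $V$ are used) and that the norm of a positive definite function in $P(G)$ is indeed realized as the infimum of $\sup_{u}\|\zeta(u)\|^{2}$ over representing sections $\zeta$, so that the displayed inequalities are legitimate; both of these are available from \cite{paterson2004fourier} and the bundle formalism of Section~2, but they should be stated carefully. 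A secondary point to verify is that $\gamma$ in Theorem~6.1 was chosen precisely so that $\eta_{t}$ lands in the fiber over units lying in $H$, which is exactly what makes the collapse $\phi(h) = \varphi(h)$ work for $h \in H$ and not merely for $h \in G^{0}$.
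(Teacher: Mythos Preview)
Your proposal is correct and follows essentially the same route as the paper: use Theorem~6.1 to embed $\pi$ isometrically into $(ind_{H}^{G}(\pi),\mu)_{|_{H}}$ and read off the extended coefficient function. The only cosmetic difference is that the paper identifies the resulting $\phi$ explicitly as the extension of $\varphi$ by zero outside $H$ and then invokes \cite[Proposition~4]{paterson2004fourier} for the norm equality, whereas you define $\phi$ abstractly as the induced coefficient and recover the norm via the isometry of $V,W$ together with the norm-decreasing property of restriction; also note that the paper takes $\xi\in C_{b}(G^{0},\mathcal{H}^{\pi})$ rather than $C_{0}$, which is the correct generality for arbitrary $\varphi\in P(H)$.
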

\begin{proof}
  Define  $\phi$ such that  $\phi(x)=\varphi(x)$ for $x\in H$ and $0$ otherwise. There exist a representation $\pi$ of $H$ and $\xi \in C_{b}(G^{0},\mathcal{H}^{\pi})$ such that $\varphi(x)=\langle \pi(x)\xi(d(x)),\xi(r(x))\rangle$. By Theorem $6.1$, we can write $\phi(x)=\langle \operatorname{ind}_{H}^{G}\pi(x)\xi(d(x)),\xi(r(x))\rangle$. Using \cite[Proposition $4$]{paterson2004fourier}, we can see that $\|\phi\|=\|\varphi\|$. 
\end{proof}
We next consider a compact transitive groupoid $G$ along with a closed, transitive, wide subgroupoid $H$ with a normalized Haar system $\{\lambda_{H}^{u}\}_{u\in G^{0}}$. A result analogous to the previous one is then established with the help of the Frobenius Reciprocity Theorem \cite[Theorem $6.1$]{sridharan2025induced}. Here we will use the notion of an internally irreducible representation of the groupoid $G$. A continuous unitary representation is internally irreducible if the representation is irreducible when restricted to each isotropy subgroups. More on this can be found in  \cite[Definition $13$]{bos2011continuous}. 
A detailed discussion of the direct sum of Hilbert bundles may be found in \cite[Section 15.14]{fell1988representations}.
\begin{thm}
    Let $H$ be a closed wide transitive subgroupoid of a compact transitive groupoid $G$, and $\pi$ be a continuous unitary representation of $H$. Then $\pi$  is a subrepresentation of $(\operatorname{ind}_{H}^{G}(\pi),\mu)_{|_{H}}$, where $\mu$ is a full equivariant measure system  on $G/H$.
\end{thm}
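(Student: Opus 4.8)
The plan is to transcribe the classical compact-group argument — a representation of a closed subgroup sits inside the restriction of the representation it induces, by Frobenius reciprocity — with ``internally irreducible'' in place of ``irreducible''. First I would reduce to the case in which $\pi$ is internally irreducible. Being a closed wide subgroupoid of the compact groupoid $G$, the groupoid $H$ is compact; being transitive and carrying a normalized Haar system, it is, up to equivalence, the compact isotropy group $H^{u}_{u}=\{x\in H: r(x)=d(x)=u\}$ at a fixed base unit $u$, so the Peter--Weyl theorem applies and $\pi$ splits as an orthogonal direct sum $\pi=\bigoplus_{j}\pi_{j}$ of internally irreducible continuous subrepresentations with finite-dimensional fibres. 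Since $ind_{H}^{G}(\cdot)$ sends orthogonal direct sums to orthogonal direct sums — with each $(ind_{H}^{G}(\pi_{j}),\mu)$ an orthogonal summand of $(ind_{H}^{G}(\pi),\mu)$ — and restriction to $H$ respects this, it suffices to show that each $\pi_{j}$ is a subrepresentation of $(ind_{H}^{G}(\pi_{j}),\mu)_{|_{H}}$ and then take the direct sum of the embeddings. So we may assume $\pi$ internally irreducible; discarding the trivial case $\pi=0$, we have $(ind_{H}^{G}(\pi),\mu)\neq 0$ by the construction of $\mathcal{F}^{\pi}(G,\mu)$ in \cite{sridharan2025induced} and the fullness of $\mu$.

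Now apply the Frobenius reciprocity theorem \cite[Theorem 6.1]{sridharan2025induced} to $(ind_{H}^{G}(\pi),\mu)$ paired with itself: it identifies $\mathrm{Hom}_{G}\big((ind_{H}^{G}(\pi),\mu),(ind_{H}^{G}(\pi),\mu)\big)$ with an intertwiner space between $\pi$ and $(ind_{H}^{G}(\pi),\mu)_{|_{H}}$, and the former contains the identity, hence is nonzero; passing to adjoints if needed, there is a nonzero continuous $H$-intertwiner $S\colon \mathcal{H}^{\pi}\to\mathcal{H}^{ind_{H}^{G}(\pi)}$. Then $S^{*}S$ is a continuous, positive, $H$-equivariant operator field on $\mathcal{H}^{\pi}$ that commutes fibrewise with the isotropy action; since $\pi$ is internally irreducible this action is irreducible, so by Schur's lemma $S^{*}_{u}S_{u}=c(u)\,\mathrm{Id}$ with $c(u)=\|S_{u}\|^{2}\ge 0$, and $c$ is continuous (e.g. $c(u)=\|S_{u}\xi(u)\|^{2}$ for a local continuous unit section $\xi$ of $\mathcal{H}^{\pi}$). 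If $c(u_{0})=0$ then $S_{u_{0}}=0$, whence the intertwining identity $S_{d(h)}=ind_{H}^{G}(\pi)(h^{-1})\,S_{r(h)}\,\pi(h)$ for $h\in H$ and the transitivity of $H$ force $S\equiv 0$, a contradiction; so $c$ is strictly positive. Therefore $c^{-1/2}S$ is a continuous $H$-intertwiner with $(c^{-1/2}S)^{*}(c^{-1/2}S)=\mathrm{Id}$, i.e. a fibrewise isometry onto an $H$-invariant continuous subbundle of $\mathcal{H}^{ind_{H}^{G}(\pi)}$ realizing $\pi$ as a subrepresentation of $(ind_{H}^{G}(\pi),\mu)_{|_{H}}$, which finishes the proof.

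The reciprocity bookkeeping is routine; the work lies in two structural facts about the continuous Hilbert-bundle category. First, that a continuous unitary representation of the compact transitive groupoid $H$ genuinely decomposes as an orthogonal direct sum of internally irreducible continuous Hilbert subbundles — one must check that the isotypic projections vary continuously and that the isotypic types are locally constant over $G^{0}$, which is precisely where the compactness of $G$ and the transitivity of $H$ enter, via the equivalence of $H$ with a compact group in the sense of \cite{bos2011continuous}. Second, that $c^{-1/2}S$ really has a \emph{continuous} subbundle for its range; this is automatic for a fibrewise-isometric continuous bundle map, but if one builds the embedding from the other direction one must note that transitivity of $H$ keeps $\dim\ker S$ locally constant, so that its orthogonal complement is a genuine continuous subbundle carrying the restriction of $ind_{H}^{G}(\pi)$. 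Once the containment is in hand it restricts, as in the discrete case, to a statement about positive definite functions, yielding an extension of $B(H)$ into $B(G)$.
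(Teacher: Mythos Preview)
Your overall architecture --- reduce to internally irreducible $\pi$, produce a nonzero $H$-intertwiner via Frobenius reciprocity, then normalize it to an isometry using Schur's lemma and the transitivity of $H$ --- is exactly the paper's. The gap lies in the Frobenius step. You apply \cite[Theorem~6.1]{sridharan2025induced} to $\rho=(ind_{H}^{G}(\pi),\mu)$ paired with itself, but the fibres of $\rho$ are the spaces $L^{2}(r_{G^{0}}^{-1}(u),\mu^{u};\mathcal{H}^{\pi})$, which are infinite-dimensional as soon as $G/H$ is not discrete. Frobenius reciprocity in the bounded-morphism sense does not survive that: the $H$-morphism that would correspond to $\mathrm{id}_{\rho}$ is the fibrewise evaluation $f\mapsto f(u)$ at the identity coset, and point evaluation is not bounded on $L^{2}$-sections. (Already in the group case $G=S^{1}$, $H=\{e\}$, $\pi=1_{H}$ one has $\mathrm{Hom}_{G}(\rho,\rho)\cong\ell^{\infty}(\mathbb{Z})$ but $\mathrm{Hom}_{H}(\rho_{|_{H}},\pi)\cong\ell^{2}(\mathbb{Z})$; the identity on $\rho$ corresponds formally to the Dirac mass at $e$, which is not in $L^{2}$.)

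The paper closes this gap with one extra step that you omit: before invoking reciprocity it appeals to the Peter--Weyl theory for the compact transitive groupoid $G$ \cite[Theorem~4.8]{edeko2022uniform} to extract a nonzero $G$-invariant subbundle $\rho'\subseteq\rho$ with finite-dimensional fibres. Reciprocity then applies legitimately to the inclusion $\rho'\hookrightarrow\rho$, producing a nonzero element of $Mor_{H}(\rho'_{|_{H}},\pi)$, which extends to $Mor_{H}(\rho_{|_{H}},\pi)$ by precomposing with the orthogonal projection onto $\rho'$. From there your $S^{*}S$ normalization (the paper's $TT^{*}$, via \cite[Lemma~4.9(i)]{bos2011continuous}) goes through unchanged. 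Note that this intermediate step is precisely where the compactness of $G$ itself --- not merely that of $H$ --- enters; in your write-up the compactness of $G$ is never actually used beyond forcing $H$ to be compact.
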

\begin{proof}
  Each representation of a compact isotropy subgroup is direct sum of irreducible representations. So using the proof of \cite[Lemma $4.1$]{edeko2022uniform} and \cite[Theorem $4.8$]{edeko2022uniform}, one can see that $\mathcal{H}^\pi= \bigoplus_{i\in I} \mathcal{H}^{\pi_{i}}$ where $\pi_{i}$'s are internally irreducible representations of $H$. Note that each $\mathcal{H}^{\pi_{i}}$ is a finite dimensional bundle and  using \cite[Theorem $3.9(ii)$]{sridharan2025induced}, $(\operatorname{ind}_{H}^{G}(\pi),\mu)= \bigoplus_{i\in I} (\operatorname{ind}_{H}^{G}(\pi_{i}),\mu)$ . So we assume $\pi$ is an internally irreducible representation of $H$. 

 Let $\rho=(\operatorname{ind}_{H}^{G}(\pi),\mu)$, then by \cite[Theorem $4.8$]{edeko2022uniform}, there exist invariant subbundle $\rho'$ of finite dimension as subrepresentation of $\rho$. Using \cite[Theorem $6.1$]{sridharan2025induced}, there exist a $T \in Mor(\rho_{|_{H}},\pi)$. Note that $T^*$ is in $Mor(\pi,\rho_{|_H})$ due to transitivity of $H$. So $TT^{*}\in Mor(\pi,\pi)$ and by \cite[Lemma $4.9(i)$]{bos2011continuous}, choose a family of unitary operators $T \in Mor(\rho_{|_{H}},\pi)$. Hence the result follows.
 \end{proof}

 \begin{corl}
    Let $G$ be a compact transitive groupoid and $H$ a closed wide transitive subgroupoid. Then any $\varphi \in P(H)$ can be extended to $\phi \in P(G)$.
\end{corl}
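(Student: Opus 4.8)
The plan is to reduce the statement about the Fourier--Stieltjes algebra to the subrepresentation result just proved in Theorem~6.3, exactly as in the discrete-unit-space case handled in Corollary~6.2. First I would take $\varphi \in P(H)$ and invoke the GNS-type description of positive definite functions on groupoids from \cite{paterson2004fourier}: there is a continuous unitary representation $\pi$ of $H$ and a section $\xi \in C_{b}(G^{0},\mathcal{H}^{\pi})$ with $\varphi(x) = \langle \pi(x)\xi(d(x)),\xi(r(x))\rangle$ for all $x \in H$. Since $G$ is compact and transitive, so is $G/H$ with its full equivariant measure system $\mu$, and $(ind_{H}^{G}(\pi),\mu)$ is a genuine continuous unitary representation of $G$; by Theorem~6.3 the representation $\pi$ is (unitarily equivalent to) a subrepresentation of $(ind_{H}^{G}(\pi),\mu)_{|_{H}}$.

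Next I would push the cyclic vector $\xi$ forward through the intertwining isometry $V : \mathcal{H}^{\pi} \hookrightarrow \mathcal{H}^{ind_{H}^{G}(\pi)}$ realizing this containment, setting $\tilde\xi = V\xi \in C_{b}(G^{0},\mathcal{H}^{ind_{H}^{G}(\pi)})$; here one uses that $V$ is a bundle map over $G^{0}$ implemented by fiberwise isometries, so it carries bounded continuous sections to bounded continuous sections and preserves norms. Define $\phi$ on $G$ by $\phi(x) = \langle (ind_{H}^{G}(\pi),\mu)(x)\,\tilde\xi(d(x)),\,\tilde\xi(r(x))\rangle$. This is a positive definite function on $G$ by construction, i.e. $\phi \in P(G)$, and for $x \in H$ the intertwining property of $V$ together with $V$ being an isometry gives $\phi(x) = \langle \pi(x)\xi(d(x)),\xi(r(x))\rangle = \varphi(x)$, so $\phi_{|_{H}} = \varphi$. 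Thus $\phi$ is the desired extension.

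The norm statement, if one wants the strengthening $\|\phi\| = \|\varphi\|$ as in Corollary~6.2, would follow from \cite[Proposition~4]{paterson2004fourier}: the inequality $\|\phi\| \le \|\varphi\|$ comes from the fact that $\tilde\xi$ has the same sup-norm as $\xi$ (as $V$ is fiberwise isometric), while $\|\phi\| \ge \|\varphi\|$ is automatic since restriction to a wide subgroupoid is norm-decreasing on $B(G)$ and $\phi_{|_H} = \varphi$. Finally, since every element of $B(G)$ (resp. $B(H)$) is a linear combination of four elements of $P(G)$ (resp. $P(H)$), extending each summand yields that the restriction map $B(G) \to B(H)$ is surjective.

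I expect the only genuine subtlety to be checking that the intertwiner $V$ from Theorem~6.3 really does act as a fiberwise isometric bundle map carrying $C_{b}(G^{0},\mathcal{H}^{\pi})$ into $C_{b}(G^{0},\mathcal{H}^{ind_{H}^{G}(\pi)})$ — this is what makes $\tilde\xi$ a legitimate bounded continuous section and what guarantees the norm identity. Everything else is a direct transcription of the argument in Corollary~6.2, with Theorem~6.3 replacing Theorem~6.1 and with the compactness/transitivity of $G$ ensuring that all the induced objects are well-defined; in particular no new analytic input beyond \cite{paterson2004fourier} and Theorem~6.3 is needed.
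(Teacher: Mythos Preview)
Your proposal is correct and matches the paper's own proof essentially line for line: the paper also writes $\varphi(h)=\langle\pi_{\varphi}(h)\eta,\eta\rangle$ via \cite[Theorem~1]{paterson2004fourier}, invokes Theorem~6.3 to obtain a unitary morphism $U=\{U_{u}\}$ intertwining $\pi_{\varphi}$ with a subrepresentation of $(ind_{H}^{G}(\pi_{\varphi}),\mu)_{|_{H}}$, and then sets $\phi(x)=\langle (ind_{H}^{G}(\pi_{\varphi}),\mu)(x)\,U\eta,\,U\eta\rangle$. Your $V$ is their $U$, and the subtlety you flag---that the intertwiner is a fiberwise unitary bundle map sending bounded continuous sections to bounded continuous sections---is exactly what the paper is using when it writes $U\eta$ without further comment.
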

\begin{proof}
    By \cite[Theorem $1$]{paterson2004fourier}, there exist a continuous unitary representation and a continuous section $\eta$ of $\mathcal{H}^{\pi_{\varphi}}$ such that $\varphi(h)= \langle\pi_{\varphi}(h)\eta,\eta\rangle$. Let $\pi=(\operatorname{ind}_{H}^{G}(\pi_{\varphi}),\mu)$. Theorem $6.3$ guarantees that $\pi_{\varphi}$ is a subrepresentation of $\pi_{|_{H}}$. So there exists a morphism $U=\{U_{u}\}_{u\in G^{0}}$ with $U_{u}$ being unitary such that $U_{r(h)}\pi_{\varphi}(h)=\pi(h)U_{d(h)}$ for all $h\in H$. 
    Hence, $\phi$ defined by $\phi(x)=\langle \pi(x)U\eta,U\eta\rangle$ extends $\varphi$.
\end{proof}
\section*{Acknowledgement}
 K. N. Sridharan is supported by the NBHM doctoral fellowship with Ref number: 0203/13(45)/2021-R\&D-II/13173.

\section*{Data Availability}
Data sharing does not apply to this article as no datasets were generated or analyzed during the current study.

\section*{competing interests}
The authors declare that they have no competing interests.
\bibliographystyle{unsrt}
\bibliography{main}
\end{document}